\newenvironment{BicEq}{%
\addtocounter{equation}{-1}

\begin{equation}}
{\end{equation}}
\begin{document}

\newcounter{Remarkcounter}
\newtheorem{theorem}{Theorem}
\newtheorem{Prop}{Proposition}
\newtheorem{Corol}{Corollary}
\newtheorem{Lemma}{Lemma}
\newtheorem{Remark}[Remarkcounter]{Remark}
\newtheorem{Example}[Remarkcounter]{Example}

\title{COMPLETE CLASSIFICATION OF BICONSERVATIVE HYPERSURFACES WITH DIAGONALIZABLE SHAPE OPERATOR IN THE MINKOWSKI 4-SPACE}
\author{Yu Fu\footnote{ School of Mathematics and Quantitative Economics, Dongbei University of Finance and Economics, Dalian 116025, China, e-mail:yufudufe@gmail.com}\ \  and Nurettin Cenk Turgay \footnote{Istanbul Technical University, Faculty of Science and Letters, Department of  Mathematics, 34469 Maslak, Istanbul, Turkey
            Phone: +90(533) 227 0041, e-mail:turgayn@itu.edu.tr} \footnote{Corresponding Author}}

\maketitle

\begin{abstract}
In this paper, we study biconservative hypersurfaces in the four
dimensional Minkowski space $\mathbb E^4_1$. We give the complete
explicit classification of biconservative hypersurfaces with
diagonalizable shape operator in $\mathbb E^4_1$.

\textbf{Keywords.} Biharmonic submanifolds, Biconservative
hypersurfaces,  Minkowski space, Diagonalizable shape operator

\textbf{Mathematics Subject Classification 2000.} Primary 53C40; Secondary 53C42, 53C50

\end{abstract}

\section{Introduction}
Recall that a biharmonic map
$\phi:(M^n,g)\longrightarrow(N^m,\langle,\rangle)$ between
Riemannian manifolds is a critical point of the bienergy functional
\begin{eqnarray*}
E_2(\phi)=\frac{1}{2}\int_M|\tau(\phi)|^2v_g,
\end{eqnarray*}
 where $\tau(\phi)=
{\rm trace \nabla d\phi}$ is the tension field of $\phi$. For a
biharmonic map, the bitension field satisfies the following
associated Euler-Lagrange equation
\begin{eqnarray*}
\tau_2(\phi)=-\Delta\tau(\phi)-{\rm trace}
R^N(d\phi,\tau(\phi))d\phi=0,
\end{eqnarray*}
where $R^N$ is the curvature tensor of $N$.

If the isometric immersion $\phi$ is a biharmonic map, then $M^n$ is
called a biharmonic submanifold of $N^m$. In last years, the
research on biharmonic maps and biharmonic submanifolds is quite
active, cf. [1-5, 9-12, 16-18, 24-26]. In particular, there is a
long standing biharmonic conjecture, posed by B. Y. Chen in 1991,
that \emph{every biharmonic submanifolds in a Euclidean space is
minimal}. The conjecture is still open so far, see Chen's book
\cite{ChenKitapTotal2ndEd} for recent progress.

For an isometric immersion $\phi$, the stress-energy tensor for the
bienergy is defined as (see \cite{CMOP})
\begin{eqnarray*}
S_2(X,Y)=\frac{1}{2}|\tau(\phi)|^2\langle X,Y\rangle+\langle
d\phi,\nabla\tau(\phi)\rangle\langle X,Y\rangle\\-\langle
d\phi(X),\nabla_Y\tau(\phi)\rangle-\langle
d\phi(Y),\nabla_X\tau(\phi)\rangle,
\end{eqnarray*}
which satisfies
\begin{eqnarray}\label{eq:stress-energy}
{\rm div} S_2 =-\tau_2(\phi)^\top.
\end{eqnarray}
An immersion (or a submanifold) is called {\em biconservative} if
${\rm div} S_2=0$ (see \cite{CMOP} for details).

Note that, for an isometric immersion $\phi$, the formula
\eqref{eq:stress-energy} means that the condition ${\rm div} S_2=0$
is equivalent to the vanishing tangent part of the corresponding
bitension field, i.e., $\tau_2(\phi)^\top=0$. Hence, the notion of
biconservative submanifolds is a natural generalization of
biharmonic submanifolds.

The study of biconservative submanifolds has recently received much
attention. Caddeo et al.  classified biconservative surfaces in the
three-dimensional Riemannian space forms, \cite{CMOP}. Hasanis and
Vlachos classified biconservative hypersurfaces in the Euclidean
spaces $\mathbb E^3$ and $\mathbb E^4$ in \cite{HsanisField}, where
the authors called biconservative hypersurfaces as
\emph{H-hypersurfaces}. Chen and Munteanu \cite{chenMunteanu2013}
showed that a $\delta(2)$-ideal biconservative hypersurface in
Euclidean space $\mathbb E^n$ is either minimal or open part of a
spherical hypercylinder. By using the framework of equivariant
differential geometry, Montaldo, Oniciuc and Ratto \cite{MOR20141}
studied $SO(p+1)\times SO(q+1)$-invariant and $SO(p+1)$-invariant
biconservative hypersurfaces in Euclidean space. Most recently, the
second author obtained the complete classification of biconservative
hypersurfaces with three distinct principal curvatures in Euclidean
spaces, \cite{TurgayHHypersurfaces}.

In the case of codimension greater than one, the situation is more
difficult without any additional assumptions just as the biharmonic
case. Montaldo et al. \cite{MOR20142} studied biconservative
surfaces in Riemannian manifolds. In particular, they gave a
complete classification of biconservative surfaces with constant
mean curvature in Euclidean 4-space. Very recently, Fetcu et al.
classified biconservative surfaces with parallel mean curvature
vector field in product spaces $\mathbb S^n\times \mathbb R$ and $\mathbb H^n\times \mathbb R$ in
\cite{FOP2014}.

The notion of biconservative submanifolds was also considered in the
context of pseudo-Riemannian geometry. The first author in
\cite{FuBiconservativeE31} and \cite{Fu2} classified biconservative
surfaces in the 3-dimensional Lorentzian space forms.

In this paper, we focus on biconservative hypersurfaces in Minkowski
space $\mathbb E^4_1$. For hypersurfaces in Minkowski space, the
shape operator can be decomposed into four canonical forms, see \cite{ONeillKitap}.
We give the complete explicit classification of biconservative
hypersurfaces with diagonalizable shape operator in $\mathbb E^4_1$.
It should be remarked that, just as the case of biharmonic submanifolds,
the geometry of biconservative submanifolds in pseudo-Riemannian
space is quite different from the Riemannian case. There are more
examples of biconservative submanifolds appearing in the
classification results, see Theorem \ref{ClassThemCase12Dist} and Theorem \ref{FullClassThemCase13Dist}.

\section{Prelimineries}

Let $\mathbb E^m_t$ denote the pseudo-Euclidean $m$-space with the canonical
pseudo-Euclidean metric tensor of index $t$ given by
$$
 g=\langle\ ,\ \rangle=-\sum\limits_{i=1}^t dx_i^2+\sum\limits_{j=t+1}^m dx_j^2.
$$
We put
\begin{eqnarray}
\mathbb S^{m-1}_t(r^2)&=&\{x\in\mathbb E^m_t: \langle x, x \rangle=r^{-2}\},\notag
\\
\mathbb H^{m-1}_{t-1}(-r^2)&=&\{x\in\mathbb E^m_t: \langle x, x \rangle=-r^{-2}\}\nonumber.
\end{eqnarray}

Consider an oriented hypersurface $M$  of the  Minkowski space $\mathbb E^{n+1}_1$ with the unit normal vector field $N$ associated with the orientation. We denote Levi-Civita connections of $\mathbb E^{n+1}_1$ and $M$ by $\widetilde{\nabla}$ and $\nabla$, respectively and let $\nabla^\perp$ stand for the normal connection of $M$. Then, the Gauss and Weingarten formulas are given, respectively, by
\begin{eqnarray}
\nonumber \widetilde\nabla_X Y&=& \nabla_X Y + h(X,Y),\\
\nonumber \widetilde\nabla_X N&=& -SX
\end{eqnarray}
for all tangent vectors fields $X,\ Y$,  where $h$ and $S$ are the second fundamental form and  the shape operator of $M$,  respectively. The Gauss and Codazzi equations are given, respectively, by
\begin{eqnarray}
\label{MinkGaussEquation}\label{GaussEq} \langle R(X,Y)Z,W\rangle&=&\langle h(Y,Z),h(X,W)\rangle-
\langle h(X,Z),h(Y,W)\rangle,\\
\label{MinkCodazzi} (\bar \nabla_X h )(Y,Z)&=&(\bar \nabla_Y h )(X,Z),
\end{eqnarray}
where $R$ is the curvature tensor associated with the connection
$\nabla$ and  $\bar \nabla h$ is defined by
$$(\bar \nabla_X h)(Y,Z)=\nabla^\perp_X h(Y,Z)-h(\nabla_X Y,Z)-h(Y,\nabla_X Z).$$
$M$ is said to be biconservative if its shape operator $S$ and mean curvature $H=\mathrm{tr}S$ satisfy
\begin{BicEq}\label{BiconservativeEq}
 S(\nabla H)+\varepsilon\frac{nH}2\nabla H=0,
\end{BicEq}
where $\varepsilon=\langle N,N\rangle$, i.e.,
$$\varepsilon=\left\{\begin{array} {lcl}
-1 &\quad&\mbox{if } M\mbox{ is Riemannian}\\
1 &\quad&\mbox{if } M\mbox{ is Lorentzian}
\end{array} \right..$$

Note that the biconservative condition \eqref{BiconservativeEq}
follows directly from
 $\tau_2(\phi)^\top=0$ as we described in Introduction, see
 \cite{CMOP}.
 \begin{Remark}
The shape operator of a hypersurface with constant mean curvature satisfies  \eqref{BiconservativeEq} trivially. Therefore, throughout  this work we will assume that $\nabla H$ does not vanish on $M$.
\end{Remark}
\section{Biconservative Hypersurfaces}
Let $M$ be an oriented hypersurface with the diagonalizable shape operator $S$ in $\mathbb E^4_1$. Consider an orthonormal frame field $\{e_1,e_2,e_3\}$ of $M$ consisting of its principal directions and let $\{\theta_1,\theta_2,\theta_3\}$ be the dual base field and $k_1,k_2,k_3$ corresponding principal directions. Then, we have
$k_1+k_2+k_3=3H$.

Now, assume that $M$ is biconservative, i.e., $S$ and $H$ satisfy \eqref{BiconservativeEq} for $n=3$. Thus, we have $\nabla H$ is a principal direction with the corresponding principal curvature proportional to $H$ by a constant. Therefore, we may assume $e_1=\nabla H/|\nabla H|$ and $k_1=\frac{-3\varepsilon}2H$. Since $e_1$ is proportional to $\nabla k_1$, we have
\begin{equation}\label{BiconservativeEqERes1}
e_2(k_1)=e_3(k_1)=0, \quad e_1(k_1)\neq0.
\end{equation}
In addition, similar to biconservative hypersurfaces in Euclidean spaces, connection forms of $M$  satisfy
\begin{equation}\label{CONNFORMS}
\omega_{12}(e_1)=\omega_{12}(e_3)=\omega_{13}(e_1)=\omega_{13}(e_2)=0,
\end{equation}
and
\begin{equation}\label{CONNFORMS2}
\omega_{23}(e_1)=0, \quad \mbox{if } k_2\neq k_3
\end{equation}
(see \cite{HsanisField,TurgayHHypersurfaces}).

Let $D$ be the two-dimensional distribution given by
\begin{equation}\label{EquationDistD}
D(m)=\mathrm{span}\{e_2|_m,e_3|_m\}.
\end{equation}
\begin{Remark}Since \eqref{BiconservativeEqERes1} implies $[e_2,e_3](k_1)=0$ and $e_1$ is proportional to $\nabla k_1$, we have $\langle[e_2,e_3],e_1\rangle=0$ which gives $[e_2,e_3]_m\in D(m)$. Therefore, $D$ is involutive.\end{Remark}

First, we obtain the following lemma.
\begin{Lemma}
Let $M$ be a biconservative hypersurface in $\mathbb E^4_1$ with the diagonalizable shape operator. Then, its principal curvatures satisfy
$$e_i(k_2)=e_i(k_3)=0,\quad i=2,3.$$
\end{Lemma}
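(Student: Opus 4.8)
The plan is to exploit the Codazzi equation \eqref{MinkCodazzi} together with the structural information already collected in \eqref{BiconservativeEqERes1}, \eqref{CONNFORMS} and \eqref{CONNFORMS2}, treating separately the regions where $k_2\neq k_3$ and the (open) region where $k_2=k_3$. First I would write out the Codazzi equation for suitable triples of the frame $\{e_1,e_2,e_3\}$. Writing $(\bar\nabla_{e_i}h)(e_j,e_k)=(\bar\nabla_{e_j}h)(e_i,e_k)$ and expanding via $\nabla_{e_i}e_j=\sum_l\omega_{jl}(e_i)e_l$, one gets the standard identities
\begin{equation}\label{CodazziExpanded}
e_i(k_j)=(k_i-k_j)\,\omega_{ij}(e_j),\qquad (k_i-k_j)\,\omega_{ij}(e_k)=(k_k-k_j)\,\omega_{kj}(e_i)
\end{equation}
for distinct $i,j,k$. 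Feeding in $k_1=-\tfrac{3\varepsilon}{2}H$ and the vanishing connection forms from \eqref{CONNFORMS}, \eqref{CONNFORMS2} should collapse most of the right-hand sides.

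Concentrate first on the region where $k_2\neq k_3$. Here \eqref{CONNFORMS2} gives $\omega_{23}(e_1)=0$, so from the second identity in \eqref{CodazziExpanded} with $(i,j,k)=(1,2,3)$ we get $(k_1-k_2)\,\omega_{12}(e_3)=(k_3-k_2)\,\omega_{32}(e_1)=0$, which is automatic since $\omega_{12}(e_3)=0$ already; the genuinely new input comes from the triples involving $e_2,e_3$ as the first two slots. Using \eqref{CodazziExpanded} with $(i,j,k)=(2,3,1)$ and $(3,2,1)$ and the fact that $\omega_{13}(e_2)=\omega_{12}(e_3)=0$, the cross terms force $e_2(k_3)$ and $e_3(k_2)$ to vanish, and then the first identity in \eqref{CodazziExpanded} applied with $(i,j)=(2,2)$-type indices — more precisely $e_2(k_2)=(k_1-k_2)\omega_{21}(e_2)+(k_3-k_2)\omega_{23}(e_2)$ rearranged against the Codazzi relation for $(\bar\nabla_{e_2}h)(e_1,e_2)=(\bar\nabla_{e_1}h)(e_2,e_2)$ — pins down $e_2(k_2)$; symmetrically for $e_3(k_3)$. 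The point is that \eqref{BiconservativeEqERes1} already kills all $e_2,e_3$ derivatives of $k_1$, so once $H$ is known the constraint $k_1+k_2+k_3=3H$ gives $e_i(k_2)+e_i(k_3)=e_i(3H-k_1)=3e_i(H)=0$ for $i=2,3$ (since $\nabla H\parallel e_1$), and it remains only to separate $e_i(k_2)$ from $e_i(k_3)$ individually, which is exactly what the Codazzi cross-terms above do.

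For the region where $k_2=k_3$ (a principal curvature of multiplicity two), the conclusion $e_i(k_2)=e_i(k_3)=0$ for $i=2,3$ is equivalent to $e_2(k_2)=e_3(k_2)=0$, and this follows even more directly: the relation $k_1+2k_2=3H$ with $\nabla H=|\nabla H|e_1$ gives $e_i(k_2)=\tfrac12 e_i(3H-k_1)=0$ for $i=2,3$ immediately from \eqref{BiconservativeEqERes1}. Finally, since the set where $k_2\neq k_3$ is open and its closure together with the interior of $\{k_2=k_3\}$ covers $M$ up to a nowhere-dense set, a continuity argument extends $e_i(k_2)=e_i(k_3)=0$ to all of $M$. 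The main obstacle I anticipate is the bookkeeping in the $k_2\neq k_3$ case: one must be careful that the Codazzi identities \eqref{CodazziExpanded} are used in the combinations that actually isolate $e_2(k_3)$ and $e_3(k_2)$ rather than merely reproducing the already-known vanishing of connection forms, and one must verify that no division by $k_i-k_j$ is performed where that difference could vanish — in particular the argument must be localized to the open set $\{k_2\neq k_3\}$ before any such manipulation, with the degenerate locus handled by the separate, easier computation above and then glued by continuity.
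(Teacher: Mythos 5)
There is a genuine gap in the region $k_2\neq k_3$, which is the heart of the lemma. Your key claim that ``the Codazzi cross-terms force $e_2(k_3)$ and $e_3(k_2)$ to vanish'' is unsupported: the Codazzi equation gives $e_2(k_3)=(k_2-k_3)\,\omega_{23}(e_3)$ and $e_3(k_2)=(k_3-k_2)\,\omega_{32}(e_2)$, and the coefficients $\omega_{23}(e_2)$, $\omega_{23}(e_3)$ are \emph{not} among the connection forms known to vanish at this stage --- \eqref{CONNFORMS} and \eqref{CONNFORMS2} only kill $\omega_{12}(e_1)$, $\omega_{12}(e_3)$, $\omega_{13}(e_1)$, $\omega_{13}(e_2)$ and $\omega_{23}(e_1)$. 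In the paper the identical vanishing of $\omega_{23}$ is deduced \emph{after} this lemma, precisely as a consequence of $e_i(k_2)=e_i(k_3)=0$ combined with Codazzi, so using it here would be circular. Moreover, the mixed Codazzi identities $(k_j-k_k)\omega_{jk}(e_i)=(k_i-k_k)\omega_{ik}(e_j)$ for distinct $i,j,k$ all reduce to $0=0$ under \eqref{CONNFORMS}--\eqref{CONNFORMS2} and give no control on $\omega_{23}(e_2)$ or $\omega_{23}(e_3)$. Your observation that $e_i(k_2)+e_i(k_3)=0$ for $i=2,3$ (since $k_2+k_3$ is a constant multiple of $k_1$ by \eqref{BiconservativeEq} and $e_i(k_1)=0$ by \eqref{BiconservativeEqERes1}) is correct, but it only controls the sum; separating it into the two individual vanishings is exactly the hard part, and no combination of Codazzi equations alone achieves it.

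The paper's actual argument is of a completely different nature. It differentiates the trace relation $k_1=k_2+k_3$ (resp.\ $-3k_1=k_2+k_3$ in the Lorentzian case) three times along $e_1$, uses Codazzi in the form $e_1(k_i)=\omega_i(k_1-k_i)$ and Gauss in the form $e_1(\omega_i)=-\omega_i^2-k_1k_i$ to express everything through $k_1$, $k_2$, $\omega_2$, $\omega_3$ and $e_1^n(k_1)$, eliminates $\omega_2$ and $\omega_3$ algebraically, and then invokes Remark \ref{RemarkBe4LemmaCase1} (that $e_1^n(k_1)$ is a function of $k_1$ alone) to arrive at a nontrivial polynomial relation $\sum_{j=0}^{14}Q_j(k_1)k_2^j=0$. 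This forces $k_2$, and hence $k_3$, to depend on $k_1$ only, which is what yields $e_i(k_2)=e_i(k_3)=0$. Your treatment of the umbilic region $k_2=k_3$ via the trace relation is fine (and slightly more direct than the paper's Codazzi computation there), but without a replacement for the elimination step the main case remains unproved.
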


\begin{Remark}\label{REMARKPoincare}
By combining \eqref{CONNFORMS} with Cartan's first structural equation one can obtain $d\theta_1=0$, i.e., $\theta_1$ is closed. The Poincar\'e Lemma implies that it is exact, i.e., there exists a local coordinate system  $(s,\hat t,\hat u)$ on a neighborhood of $m\in M$ such that $\theta_1=ds$ from which we obtain $e_1= \frac\partial{\partial s}$. Thus, we have $k_1=k_1(s)$, $k_i=k_i(s,\hat t,\hat u),\ i=2,3$. Since $k_1'(s)\neq0$ because of \eqref{BiconservativeEqERes1}, the inverse function theorem implies $s=s(k_1)$ on a neighborhood $\mathcal N_m$ of $m$ in $M$ and we have $k_i=k_i(k_1,t,u)$. We will prove $k_i=k_i(k_1)$ on $\mathcal N_m$.
\end{Remark}
\begin{Remark}\label{RemarkBe4LemmaCase1}
Note that, since a further computation yields
$$e_ie_1^n(k_1)=0,\quad i=2,3,\ n\in\mathbb N,$$
we have $e_1^n(k_1)=f_n(k_1)$ for a smooth function $f_n$ on $\mathcal N_m$, where $e_1^n=\underbrace{e_1e_1\hdots e_1}_{n\mbox{-times}}$.
\end{Remark}

\begin{proof}
If $k_2=k_3$ proof directly follows from the Codazzi equation \eqref{MinkCodazzi} for $X=e_2$, $Y=e_3$, $Z=e_2$ and $X=e_3$, $Y=e_2$, $Z=e_3$. Thus, will assume that $k_2-k_3$ does not vanish on $M$. We have two cases subject to being Riemannian or Lorentzian of $M$.

\textit{Case I.} $M$ is Riemannian. In this case, we have $\varepsilon_1=-1$ and \eqref{BiconservativeEq} gives
\begin{equation}\label{Case1RiemannianEq1}
k_1=k_2+k_3.
\end{equation}
The Codazzi equation \eqref{MinkCodazzi} for $X=e_1,Y=Z=e_i$ implies
$$e_1(k_i)=\omega_i(k_1-k_i).$$
In addition, by combining \eqref{CONNFORMS} with the Gauss equation $R(e_i,e_1,e_1,e_i)=k_1k_i$ we have
$$e_1(\omega_i)=-\omega_i^2-k_1k_i,$$
where we put $\omega_i=\omega_{1i}(e_i).$ We apply $e_1$ 3 times to \eqref{Case1RiemannianEq1} to obtain
\begin{subequations}\label{Case1RiemannianEq1ALL}
\begin{eqnarray}
\label{Case1RiemannianEq1b}
e_1(k_1)&=&\left(k_1-k_2\right) \omega_2+\left(k_1-k_3\right) \omega_3,\\
\label{Case1RiemannianEq1c}
e_1^2(k_1)+2k_1 k_2 k_3&=&\left(\omega_2+\omega_3\right) e_1(k_1)-2\left(k_1-k_2\right) \omega_2^2-2\left(k_1-k_3\right) \omega_3^2
\end{eqnarray}
and
\begin{align}
\begin{split}
\label{Case1RiemannianEq1d}
\left(2 k_2 k_3+k_1^2\right) e_1(k_1)+e_1^3(k_1)=&6(k_1-k_2)\omega_2^3+6(k_1-k_3)\omega_3^3-3e_1(k_1)(\omega_2^2\omega_3^2)\\
&+\left(e_1^2(k_1)+2 k_1 \left(k_1-k_2\right) \left(2 k_2-k_3\right)\right)\omega_2\\
&+\left(e_1^2(k_1)+2 k_1 \left(k_3-k_1\right) \left(k_2-2 k_3\right)\right)\omega_3.
\end{split}
\end{align}
\end{subequations}
Note that from \eqref{Case1RiemannianEq1} and \eqref{Case1RiemannianEq1b} we get
\begin{equation}\label{Case1RiemannianEq2}
\omega_3= \frac{e_1(k_1)-(k_1 -k_2) \omega_2}{k_2}.
\end{equation}

Next, we use \eqref{Case1RiemannianEq1} and \eqref{Case1RiemannianEq2} on \eqref{Case1RiemannianEq1c} and \eqref{Case1RiemannianEq1d} to get
\begin{eqnarray}
\label{Case1RiemannianEq4b}
A_1\omega_2^3+A_2\omega_2^2+A_3\omega_2+A_4&=&0,\\
\label{Case1RiemannianEq4c}
B_1\omega_2^2+B_2\omega_2+B_3&=&0,
\end{eqnarray}
where $A_j$ and $B_j$ are the functions given by
\begin{align}\nonumber
\begin{split}
A_1=&6 k_1 \left(k_1-2 k_2\right) \left(k_1-k_2\right),\\
A_2=&-3 \left(5 k_1^2-10 k_2 k_1+4 k_2^2\right) e_1(k_1),\\
A_3=&6 \left(k_1-k_2\right) \left(2 e_1(k_1)^2+k_1 \left(k_1-2 k_2\right) k_2^2\right)-\left(k_1-2 k_2\right) k_2 e_1^2(k_1),\\
A_4=&-e_1(k_1) \left(3 e_1(k_1)^2+k_2 \left(e_1^2(k_1)+2 k_2^3-8 k_1 k_2^2+3 k_1^2 k_2\right)\right)+k_2^2 e_1^3(k_1),\\
B_1=&2 k_1 \left(k_2-k_1\right),\\
B_2=&\left(3 k_1-2 k_2\right) e_1(k_1),\\
B_3=&-e_1(k_1)^2-k_2 \left(e_1^2(k_1)+2 k_1 k_2 (k_1-k_2)\right).
\end{split}
\end{align}
Finally, we eliminate $\omega_2$ from \eqref{Case1RiemannianEq4b} and \eqref{Case1RiemannianEq4c} to get
\begin{align}
\begin{split}4 A_1 B_1 \left(2 A_3 B_1 \left(\delta^2-B_2^4\right)+B_2 \left(4 A_4 B_1^2 \left(B_2^2+3 \delta\right)+A_2 \left(\delta-B_2^2\right)^2\right)\right)\\
+A_1^2 \left(\delta-B_2^2\right)^3-4 B_1^2 \left(4 B_1^2 \left(A_3^2 \left(B_2^2-\delta\right)-4 A_4 A_3 B_1 B_2+4 A_4^2 B_1^2\right)\right.\\
\left.+4 A_2 B_1 \left(A_3 B_2 \left(\delta-B_2^2\right)+2 A_4 B_1 \left(B_2^2+\delta\right)\right)+A_2^2 \left(\delta-B_2^2\right)^2\right),
\end{split}
\end{align}
where $\delta=B_2^2-4B_1B_3$. Next, we put $A_i,B_i$ into the equation above to obtain a 14th degree polinomial
$$\sum\limits_{j=0}^{14} P_j(k_1,e_1(k_1),e_1^2(k_1),e_1^3(k_1))k_2^j=0$$
with the starting term $P_{14}=-16384  k_1^2e_1(k_1).$ However, Remark \ref{RemarkBe4LemmaCase1} implies
$$P_j\left(k_1,e_1(k_1),e_1^2(k_1),e_1^3(k_1)\right)=Q_j(k_1)$$ for a function $Q_j$. Therefore, we have
$$\sum\limits_{j=0}^{14} Q_j(k_1)k_2^j=0.$$
Thus, $k_2$ is depending on only $k_1$. Moreover, \eqref{Case1RiemannianEq1} implies that $k_3$ is also depending on only $k_1$.

\textit{Case II.} $M$ is Lorentzian. In this case, we have $\varepsilon_1=-1$ and \eqref{BiconservativeEq} gives
\begin{equation}\label{Case1LorentzianEq1}
-3k_1=k_2+k_3.
\end{equation}
By a similar way, we obtain
$$\sum\limits_{j=0}^{14} \tilde Q_j(k_1)k_2^j=0$$
for some functions $\tilde Q_j$.
Thus, $k_2$ and $k_3$ are depending on only $k_1$. Hence the proof is completed.
\end{proof}

Now, we have $e_i(k_j)=0$ which implies $e_ie_1(k_j),\ i,j=2,3$. Therefore, the Codazzi equation  $e_1(k_i)=\omega_{1i}(e_i)(k_1-k_i)$ implies
\begin{equation}\label{eiejomega}
e_i(\omega_{1j}(e_j))=0.
\end{equation}
 Hence, $\omega_{12}(e_2), \omega_{13}(e_3)$ are constant on  any integral submanifold $\hat M$ of the distribution $D$ given by \eqref{EquationDistD}. Let $c_i,d_i$ are the constants given by
\begin{equation}\label{cdConstants}
d_1=\left.k_2\right|_{\hat M},d_2=\left.k_3\right|_{\hat M},c_1=\left.\omega_{12}(e_2)\right|_{\hat M},c_2=\left.\omega_{13}(e_3)\right|_{\hat M}
\end{equation}
and consider  the local orthonormal frame field $\{f_1,f_2;f_3,f_4\}$ consisting of restriction of vector fields $e_2,e_3,e_1,N$ to $\hat M$, respectively. Then, we have
\begin{Lemma}\label{Case1LemmaShapeOperators}
$f_3$ and $f_4$ are parallel and the matrix representations of the shape operators $\hat A_{f_3}$ and $\hat A_{f_4}$  are
$$\hat A_{f_3}=\mathrm{diag}(c_1,c_2),\ \hat  A_{f_4}=\mathrm{diag}(d_1,d_2),$$
where $c_i,d_i$ are the constants given by \eqref{cdConstants}.
\end{Lemma}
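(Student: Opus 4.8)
The plan is to view $\hat M$ as a surface in $\mathbb E^4_1$ through the iterated isometric immersion $\hat M\hookrightarrow M\hookrightarrow\mathbb E^4_1$ and to read off both the normal connection and the two shape operators directly from the Gauss and Weingarten formulas. The first step is to identify the normal bundle of $\hat M$: since $\hat M$ is an integral submanifold of $D$, we have $T\hat M=\mathrm{span}\{e_2,e_3\}$, so $e_1$ (orthogonal to $D$) and $N$ (orthogonal to $M$) are both normal to $\hat M$ in $\mathbb E^4_1$, and a dimension count ($4-2=2$) shows that $\{f_3,f_4\}=\{e_1|_{\hat M},N|_{\hat M}\}$ is an orthonormal frame of the normal bundle of $\hat M$. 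It therefore suffices to compute $\widetilde\nabla_{f_a}f_3$ and $\widetilde\nabla_{f_a}f_4$ for $a=1,2$ (with $f_1=e_2$, $f_2=e_3$) and to split each of these into its part tangent to $\hat M$, which determines $\hat A_{f_3}$ and $\hat A_{f_4}$, and its part normal to $\hat M$, which determines the normal connection $\hat\nabla^{\perp}$.

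For $f_4=N$ everything follows from the Weingarten formula of $M$: for $a\in\{2,3\}$ one has $\widetilde\nabla_{f_a}N=-Sf_a=-k_af_a$, which is tangent to $\hat M$. Hence $\hat\nabla^{\perp}_{f_a}f_4=0$, so $f_4$ is parallel, and the tangential parts give $\hat A_{f_4}=\mathrm{diag}(k_2|_{\hat M},k_3|_{\hat M})=\mathrm{diag}(d_1,d_2)$, where the entries are constant on $\hat M$ by the previous lemma.

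For $f_3=e_1$ I would use the Gauss formula of $M$, writing $\widetilde\nabla_{e_2}e_1=\nabla_{e_2}e_1+h(e_2,e_1)$. The normal term $h(e_1,e_2)$ vanishes because $e_1$ and $e_2$ are distinct principal directions of $M$, and in the tangential term $\nabla_{e_2}e_1$ the $e_3$-coefficient $\omega_{13}(e_2)$ vanishes by \eqref{CONNFORMS}; thus $\widetilde\nabla_{e_2}e_1$ is the $e_2$-multiple with coefficient $\omega_{12}(e_2)=c_1$ on $\hat M$. Symmetrically, using $\omega_{12}(e_3)=0$ from \eqref{CONNFORMS}, one finds that $\widetilde\nabla_{e_3}e_1$ is the $e_3$-multiple with coefficient $\omega_{13}(e_3)=c_2$. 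In particular both $\widetilde\nabla_{f_a}e_1$ lie in $T\hat M$: their $e_1$-components vanish since $\langle\widetilde\nabla_{f_a}e_1,e_1\rangle=\tfrac12 f_a\langle e_1,e_1\rangle=0$, and their $N$-components vanish since $\langle\widetilde\nabla_{f_a}e_1,N\rangle=\langle h(f_a,e_1),N\rangle=0$. Therefore $\hat\nabla^{\perp}_{f_a}f_3=0$, so $f_3$ is parallel, and the tangential parts yield $\hat A_{f_3}=\mathrm{diag}(c_1,c_2)$, the entries being constant on $\hat M$ by \eqref{eiejomega}.

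There is no real analytic difficulty; the argument is bookkeeping with the iterated second fundamental form, and the points that need attention are: (i) the dimension count that identifies $\mathrm{span}\{e_1,N\}$ with the full normal space of $\hat M$; (ii) not conflating the second fundamental form $h$ of $M$ in $\mathbb E^4_1$ with that of $\hat M$ in $\mathbb E^4_1$; and (iii) keeping the sign/normalization conventions for the connection forms $\omega_{ij}$ and for the shape operator consistent, so that the diagonal entries come out exactly as $c_i$ and $d_i$. Since $\{e_1,e_2,e_3,N\}$ is orthonormal and only the relations $\langle e_i,e_j\rangle=0$ for $i\neq j$ enter the computation, the pseudo-Riemannian signature plays no role and the argument is formally identical to the Riemannian case.
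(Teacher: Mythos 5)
Your argument is correct and is exactly the computation the paper leaves implicit: the lemma is stated without a written proof, as an immediate consequence of \eqref{CONNFORMS}, \eqref{eiejomega}, the constancy of $k_2,k_3$ on $\hat M$ from Lemma~1, and the Gauss--Weingarten formulas, which is precisely what you carry out. The only caveat is the sign convention you flag yourself: the paper is not internally consistent about whether $\widetilde\nabla_{e_2}e_1$ equals $+\omega_{12}(e_2)e_2$ or $-\omega_{12}(e_2)e_2$ (compare the proof of Proposition~\ref{PropCase1CloseToClasssificationEq000} with \eqref{Case1IntSub1Eq0}), so your diagonal entries agree with the stated $\mathrm{diag}(c_1,c_2)$ up to that sign ambiguity, which does not affect the substance of the lemma.
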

\noindent Moreover, we have
\begin{Corol}
$\hat M$ has parallel mean curvature vector in $\mathbb E^4_1$.
\end{Corol}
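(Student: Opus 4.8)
The plan is to read the mean curvature vector of $\hat M$ off Lemma \ref{Case1LemmaShapeOperators} and observe that every ingredient entering it is either a constant or a parallel normal field. First I would write down the second fundamental form $\hat h$ of the surface $\hat M\subset\mathbb E^4_1$ in the orthonormal normal frame $\{f_3,f_4\}$: putting $\varepsilon_\alpha=\langle f_\alpha,f_\alpha\rangle$ for $\alpha=3,4$, the expansion of a normal vector in this frame gives
\[
\hat h(X,Y)=\varepsilon_3\langle \hat A_{f_3}X,Y\rangle f_3+\varepsilon_4\langle \hat A_{f_4}X,Y\rangle f_4
\]
for all $X,Y$ tangent to $\hat M$, since $\langle \hat A_\xi X,Y\rangle=\langle \hat h(X,Y),\xi\rangle$ for each normal field $\xi$.

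Next I would compute the mean curvature vector $\hat H=\tfrac12\,\mathrm{trace}\,\hat h=\tfrac12\bigl(\varepsilon_1\hat h(f_1,f_1)+\varepsilon_2\hat h(f_2,f_2)\bigr)$, where $\varepsilon_i=\langle f_i,f_i\rangle$ for the tangent directions $f_1,f_2$. Using the diagonal forms $\hat A_{f_3}=\mathrm{diag}(c_1,c_2)$ and $\hat A_{f_4}=\mathrm{diag}(d_1,d_2)$ supplied by Lemma \ref{Case1LemmaShapeOperators}, the $\varepsilon_i^2=1$ cancellations leave
\[
\hat H=\tfrac12\bigl(\varepsilon_3(c_1+c_2)f_3+\varepsilon_4(d_1+d_2)f_4\bigr).
\]

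Finally, the numbers $c_1,c_2,d_1,d_2$ introduced in \eqref{cdConstants} are constants on $\hat M$, and Lemma \ref{Case1LemmaShapeOperators} states that $f_3$ and $f_4$ are parallel for the normal connection $\nabla^\perp$ of $\hat M$ in $\mathbb E^4_1$. Hence $\hat H$ is a constant-coefficient combination of parallel normal fields, so $\nabla^\perp_X\hat H=0$ for every $X$ tangent to $\hat M$, which is exactly the assertion. I do not expect a genuine obstacle here — the argument is a direct corollary of Lemma \ref{Case1LemmaShapeOperators}; the only points demanding a little care are the pseudo-Riemannian sign and trace conventions in the formula for $\hat H$, and the tacit use of the non-degeneracy of $\hat M$ (which legitimizes the normal-frame decomposition of $\hat h$), both guaranteed by the way the frame $\{f_1,f_2;f_3,f_4\}$ was constructed.
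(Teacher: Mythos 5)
Your argument is correct and is exactly the intended one: the paper states this Corollary without proof as an immediate consequence of Lemma \ref{Case1LemmaShapeOperators}, and your computation $\hat H=\tfrac12\left(\varepsilon_3(c_1+c_2)f_3+\varepsilon_4(d_1+d_2)f_4\right)$ with constant coefficients and parallel normal frame fields $f_3,f_4$ is precisely the reasoning being left implicit. Your attention to the sign conventions $\varepsilon_i^2=1$ in the trace is correct and there is no gap.
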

In addition, if $M$ has three distinct principal curvatures, then by combining $e_i(k_2)=e_i(k_3)=0$ with the Codazzi equation \eqref{MinkCodazzi} and taking into account \eqref{CONNFORMS2}, one can see that the connection form $\omega_{23}$ vanishes identically. Therefore, we have
\begin{Lemma}\label{LemmaLeviCivitaThreeDistCurve}
If $M$ is a biconservative hypersurface in $\mathbb E^4_1$ with three real, distinct principal curvatures, then the Levi-Civita connection $\hat\nabla$ of $\hat M$  satisfies $\hat\nabla_{f_i}f_j=0,\ i,j=1,2$. Consequently,  $\hat M$ is flat.
\end{Lemma}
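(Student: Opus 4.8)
The plan is to exploit the vanishing of the connection form $\omega_{23}$ in the three-distinct-curvatures case, which has already been established in the paragraph preceding the statement. Recall that $\{f_1,f_2\}$ are the restrictions of the principal directions $e_2,e_3$ to the integral leaf $\hat M$, and that by \eqref{CONNFORMS} we have $\omega_{12}(e_1)=\omega_{13}(e_1)=0$. The intrinsic Levi-Civita connection of $\hat M$ is the tangential component (relative to $\hat M$) of the ambient connection $\nabla$ of $M$, so I would compute $\hat\nabla_{f_i}f_j = \bigl(\nabla_{e_i}e_j\bigr)^{\top_{\hat M}}$ for $i,j\in\{1,2\}$, where the superscript denotes projection onto $T\hat M=\mathrm{span}\{f_1,f_2\}$.

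First I would write out $\nabla_{e_2}e_2 = \omega_{12}(e_2)e_1+\omega_{32}(e_2)e_3$ using the structure equations; since $\omega_{23}\equiv0$ in this case, the $e_3$-component drops and the $e_1$-component is normal to $\hat M$, so its $T\hat M$-projection is zero, i.e. $\hat\nabla_{f_1}f_1=0$. The same computation with indices $2$ and $3$ interchanged gives $\nabla_{e_3}e_3=\omega_{13}(e_3)e_1+\omega_{23}(e_3)e_2$; again $\omega_{23}\equiv0$ kills the $e_2$-term and $e_1$ is $\hat M$-normal, so $\hat\nabla_{f_2}f_2=0$. For the mixed terms, $\nabla_{e_2}e_3=\omega_{13}(e_2)e_1+\omega_{23}(e_2)(\cdots)$, and both $\omega_{13}(e_2)=0$ (by \eqref{CONNFORMS}) and $\omega_{23}(e_2)=0$ (since $\omega_{23}\equiv0$); likewise $\nabla_{e_3}e_2=0$ by $\omega_{12}(e_3)=0$ together with $\omega_{23}\equiv 0$. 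Hence all four covariant derivatives vanish identically on $\hat M$, which is exactly $\hat\nabla_{f_i}f_j=0$ for $i,j=1,2$.

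Finally, flatness is immediate: since $\hat\nabla_{f_i}f_j=0$ for all $i,j$, the frame $\{f_1,f_2\}$ is parallel, so the curvature operator $\hat R(f_i,f_j)f_k = \hat\nabla_{f_i}\hat\nabla_{f_j}f_k-\hat\nabla_{f_j}\hat\nabla_{f_i}f_k-\hat\nabla_{[f_i,f_j]}f_k$ vanishes (each term is zero, noting that $[f_i,f_j]=\hat\nabla_{f_i}f_j-\hat\nabla_{f_j}f_i=0$ as well). Therefore $\hat M$ is flat. The only point requiring care — and the main (modest) obstacle — is bookkeeping the connection forms: one must be sure that the relation $\omega_{23}\equiv0$ established for three distinct curvatures, rather than merely $\omega_{23}(e_1)=0$ from \eqref{CONNFORMS2}, is what is being used, and that the Levi-Civita connection of the leaf is genuinely the $T\hat M$-projection of $\nabla$ (this follows because $\hat M$ is a Riemannian submanifold of the Riemannian manifold $M$, so Gauss's formula applies intrinsically). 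No estimates or case analysis beyond this are needed.
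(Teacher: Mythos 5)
Your proof is correct and follows essentially the same route as the paper: the paper's justification of this lemma is precisely the observation (in the preceding paragraph) that $\omega_{23}$ vanishes identically when the three principal curvatures are distinct, combined with \eqref{CONNFORMS}, so that every $\nabla_{e_i}e_j$ ($i,j=2,3$) has no component tangent to $\hat M$. The only cosmetic slip is calling $\hat M$ a Riemannian submanifold of a Riemannian $M$ — both may be Lorentzian — but the Gauss formula you invoke holds for any non-degenerate leaf, so nothing is affected.
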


We have the following proposition (see also [20, Lemma 4.2]).
\begin{Prop}\label{Prop1CoordSystem}
Let $M$ be  a biconservative hypersurface with diagonalizable shape operator in $\mathbb E^4_1$. Then, there exists a local coordinate system $(s,t,u)$ such that
\begin{equation}\label{Case1CloseToClasssificationEq00}
e_1=\frac\partial{\partial s},\quad e_2=\frac1{E_1}\frac\partial{\partial t},\quad e_3=\frac1{E_2}\frac\partial{\partial u}.
\end{equation}
\end{Prop}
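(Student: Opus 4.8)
The plan is to build the coordinate system $(s,t,u)$ in two stages, first producing $s$ from the closed $1$-form $\theta_1$ and then producing $t,u$ by working on the integral leaves of the involutive distribution $D$. For the first stage, Remark \ref{REMARKPoincare} already gives a chart $(s,\hat t,\hat u)$ with $e_1=\partial/\partial s$ coming from $\theta_1=ds$; what remains is to adjust the remaining two coordinates along each leaf of $D$ so that $e_2$ and $e_3$ become coordinate directions up to the positive factors $E_1,E_2$. The key structural input is that $e_2,e_3$ are principal directions for distinct (or, in the repeated case, conveniently normalized) principal curvatures, together with the vanishing of the connection forms recorded in \eqref{CONNFORMS}, which translates into control of the brackets $[e_1,e_2]$, $[e_1,e_3]$, $[e_2,e_3]$.

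Concretely, I would first restrict attention to a single integral surface $\hat M$ of $D$. When $M$ has three distinct principal curvatures, Lemma \ref{LemmaLeviCivitaThreeDistCurve} says $\hat\nabla_{f_i}f_j=0$, so $[f_1,f_2]=\hat\nabla_{f_1}f_2-\hat\nabla_{f_2}f_1=0$ on $\hat M$; two commuting linearly independent vector fields on a surface can be simultaneously rectified, giving coordinates $(t,u)$ on $\hat M$ with $\partial/\partial t$ parallel to $e_2$ and $\partial/\partial u$ parallel to $e_3$. In the case $k_2=k_3$ the distribution $D$ is an eigendistribution of a multiple of the identity and one can instead choose any coordinates $(t,u)$ on $\hat M$ and re-decompose; either way one gets $e_2=\tfrac1{E_1}\partial/\partial t$, $e_3=\tfrac1{E_2}\partial/\partial u$ on the leaf, where $E_1,E_2>0$. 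Then I would let $(t,u)$ vary smoothly with the leaf parameter $s$: since $e_1=\partial/\partial s$ already and $D$ is integrable with $\theta_1=ds$, the flow of $e_1$ carries leaves to leaves, so one can transport the leafwise coordinates along $e_1$ and obtain a genuine chart $(s,t,u)$ on a neighbourhood $\mathcal N_m$. It remains to check that in this transported chart $e_1$ is still exactly $\partial/\partial s$ and $e_2,e_3$ are still proportional to $\partial/\partial t,\partial/\partial u$; this follows because $e_1$ annihilates $t$ and $u$ by construction (they are constant along the $e_1$-flow lines used to transport them) and because $D=\ker\theta_1=\mathrm{span}\{\partial/\partial t,\partial/\partial u\}$ forces $e_2,e_3$ to have no $\partial/\partial s$-component.

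The main obstacle is the compatibility of the leafwise rectification with the transverse direction: rectifying $e_2,e_3$ independently on each leaf is easy, but one must ensure the resulting functions $t,u$ are smooth across leaves and satisfy $e_1(t)=e_1(u)=0$, and that the coefficient functions $E_1,E_2$ are well-defined and smooth on $\mathcal N_m$. This is where one uses that $\omega_{12}(e_1)=\omega_{13}(e_1)=0$ from \eqref{CONNFORMS}: these identities say exactly that parallel transport along $e_1$ preserves the lines $\mathbb R e_2$ and $\mathbb R e_3$, so the $e_1$-flow maps $e_2$-curves to $e_2$-curves and $e_3$-curves to $e_3$-curves, making the transport of the leafwise coordinates consistent. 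Once this is set up the verification that \eqref{Case1CloseToClasssificationEq00} holds is a short computation with the chain rule, and the proposition follows; reference to [20, Lemma 4.2] indicates the construction parallels the Euclidean case treated there, so I would model the details on that argument.
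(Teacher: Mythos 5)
Your construction is correct in substance but takes a genuinely different route from the paper. The paper's proof is a two-line appeal to the simultaneous-integrability lemma of Kobayashi--Nomizu ([19, Lemma in page 182]): the involutive distribution $D$ and a complementary involutive distribution split $TM$, so there is a chart $(\hat s,t,u)$ adapted to the splitting, and $\hat s$ is then replaced by the function $s$ of Remark \ref{REMARKPoincare} (where $\theta_1=ds$) to make $e_1$ exactly $\partial/\partial s$. You instead build the chart by hand: you rectify the frame on a leaf of $D$ using Lemma \ref{LemmaLeviCivitaThreeDistCurve} (which gives $[f_1,f_2]=\hat\nabla_{f_1}f_2-\hat\nabla_{f_2}f_1=0$, hence a genuine coordinate frame on the leaf) and then Lie-transport the leaf coordinates along the flow of $e_1$. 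Both arguments ultimately rest on the same identities \eqref{CONNFORMS}, \eqref{CONNFORMS2} and on $d\theta_1=0$; yours is more self-contained and makes explicit why the individual line fields $\mathbb{R}e_2$, $\mathbb{R}e_3$, and not merely the plane field $D$, can be straightened --- a point the paper glosses over (its $D^\perp=\mathrm{span}\{e_3\}$ is evidently a misprint, and the cited lemma as invoked only separates $D$ from a complement). One small repair to your write-up: what the transport step needs is control of the Lie brackets, $[e_1,e_i]\in\mathbb{R}e_i$, and the identities $\omega_{12}(e_1)=\omega_{13}(e_1)=0$ alone only prevent $\nabla_{e_1}e_i$ from having an $e_1$-component; to rule out the $e_1$-flow rotating $e_2$ into $e_3$ inside $D$ you must also invoke $\omega_{23}(e_1)=0$ from \eqref{CONNFORMS2}. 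In the case $k_2=k_3$, where \eqref{CONNFORMS2} is unavailable, this is harmless because every orthonormal frame of $D$ consists of principal directions and you may redefine $e_2,e_3$ to be the transported frame, as you indicate.
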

\begin{proof}
 Let $D^\perp$ be the distribution given by $D^\perp(m)=\mathrm{span}\{e_3|_m\}$. Since $D^\perp$ and $D$ are involutive and $D(m)\oplus D^\perp(m)=T_mM$, by using [19, Lemma in page 182], we see that there is a local coordiane system $(\hat s,t,u)$ on $M$ such that $e_1$ is proportional to $\partial_{\hat s}$ and
$e_2=\frac1{E_1}\frac\partial{\partial t},\ e_3=\frac1{E_2}\frac\partial{\partial u}$.  Let $(s,\hat t, \hat u)$ be the coordinate system given in Remark \ref{REMARKPoincare}. Then, the local coordinate system $(s,t,u)$ satisfies the condition given in the proposition.
\end{proof}

Next, we obtain a local parametrization of biconservative hypersurfaces.
\begin{Prop}\label{PropCase1CloseToClasssificationEq000}
Let $M$ be  a biconservative hypersurface with diagonalizable shape operator in $\mathbb E^4_1$. If $M$ has two distinct principal curvature, then it has a local parametrization
\begin{equation}\label{Case1CloseToClasssificationEq001}
x(s,t,u)=\phi(s)\Theta(t,u)+\Gamma(s)
\end{equation}
for some vector valued functions $\Theta,\Gamma$ and a function $\phi.$
 On the other hand, if $M$ has three distinct principal curvature, then $M$ has a local  parametrization
\begin{equation}\label{Case1CloseToClasssificationEq000}
x(s,t,u)=\phi_1(s)\Theta_1(t)+\phi_2(s)\Theta_2(u)+\Gamma(s)
\end{equation}
for some vector valued functions $\Theta_1,\Theta_2,\Gamma$ and functions $\phi_1,\phi_2.$
\end{Prop}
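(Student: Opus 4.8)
The plan is to integrate the position vector $x=x(s,t,u)\colon M\to\mathbb E^4_1$ of the immersion directly in the coordinate system $(s,t,u)$ of Proposition~\ref{Prop1CoordSystem}, exploiting that the connection forms governing the $s$-evolution of $x$ are functions of $s$ alone. By \eqref{Case1CloseToClasssificationEq00} we have $x_s=e_1$, $x_t=E_1e_2$, $x_u=E_2e_3$. Since $e_1,e_2,e_3$ are mutually orthogonal principal directions, $h(e_1,e_2)=h(e_1,e_3)=0$, and \eqref{CONNFORMS} (which annihilates $\omega_{13}(e_2)$ and $\omega_{12}(e_3)$) gives $\widetilde\nabla_{e_2}e_1=\omega_{12}(e_2)\,e_2$ and $\widetilde\nabla_{e_3}e_1=\omega_{13}(e_3)\,e_3$; hence
$$x_{st}=\partial_t(x_s)=\widetilde\nabla_{\partial_t}e_1=E_1\widetilde\nabla_{e_2}e_1=\omega_{12}(e_2)\,x_t,\qquad x_{su}=\partial_u(x_s)=E_2\widetilde\nabla_{e_3}e_1=\omega_{13}(e_3)\,x_u .$$
By \eqref{eiejomega} the coefficients $\omega_{12}(e_2)$ and $\omega_{13}(e_3)$ are annihilated by $e_2$ and $e_3$, hence are functions of $s$ only. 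Integrating the two linear vector ODEs above in $s$ (on a smaller neighbourhood, fixing a slice $s=s_0$) yields
$$x_t(s,t,u)=\phi_1(s)\,\Psi_1(t,u),\qquad x_u(s,t,u)=\phi_2(s)\,\Psi_2(t,u),$$
where $\phi_1,\phi_2$ are exponentials of antiderivatives of $\omega_{12}(e_2),\omega_{13}(e_3)$ and $\Psi_1=x_t|_{s=s_0}$, $\Psi_2=x_u|_{s=s_0}$.

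Assume first that $M$ has two distinct principal curvatures; then necessarily $k_2=k_3\ne k_1$, and the Codazzi identity $e_1(k_i)=\omega_{1i}(e_i)(k_1-k_i)$ together with $k_1\ne k_2$ forces $\omega_{12}(e_2)=\omega_{13}(e_3)$, hence $\phi_1=\phi_2=:\phi$. The equality of mixed partials $x_{tu}=x_{ut}$ then reads $\phi(s)\,\partial_u\Psi_1=\phi(s)\,\partial_t\Psi_2$, so the $\mathbb E^4_1$-valued $1$-form $\Psi_1\,dt+\Psi_2\,du$ is closed and, shrinking the neighbourhood, exact: $\Psi_1=\partial_t\Theta$, $\Psi_2=\partial_u\Theta$ for some $\Theta=\Theta(t,u)$. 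Since $\partial_t\big(x-\phi(s)\Theta\big)=\partial_u\big(x-\phi(s)\Theta\big)=0$, the difference depends on $s$ only, and setting $\Gamma(s):=x-\phi(s)\Theta$ gives the parametrization \eqref{Case1CloseToClasssificationEq001}.

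Assume now that $M$ has three distinct principal curvatures (so that, moreover, $\omega_{23}\equiv0$). Here $\phi_1$ and $\phi_2$ are in general linearly independent functions of $s$, and $x_{tu}=x_{ut}$ becomes $\phi_1(s)\,\partial_u\Psi_1=\phi_2(s)\,\partial_t\Psi_2$; comparing the $s$-dependence of the two sides forces $\partial_u\Psi_1\equiv0$ and $\partial_t\Psi_2\equiv0$, so $\Psi_1=\Psi_1(t)$ and $\Psi_2=\Psi_2(u)$. Integrating $x_t=\phi_1(s)\Psi_1(t)$ in $t$ gives $x=\phi_1(s)\Theta_1(t)+G(s,u)$, and then $G_u=x_u=\phi_2(s)\Psi_2(u)$ yields $G=\phi_2(s)\Theta_2(u)+\Gamma(s)$, which is the parametrization \eqref{Case1CloseToClasssificationEq000}.

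The point requiring care is the interface between the two cases: one must check that in the three–principal–curvature situation $\phi_1$ and $\phi_2$ are genuinely non-proportional as functions of $s$ — equivalently that $\omega_{12}(e_2)\ne\omega_{13}(e_3)$ — since if $\phi_1=c\,\phi_2$ the argument only produces a parametrization of the two-curvature type \eqref{Case1CloseToClasssificationEq001}. This should be excluded (or isolated as an exceptional family) using the biconservativity relations $k_1=k_2+k_3$ / $-3k_1=k_2+k_3$ from \eqref{Case1RiemannianEq1}, \eqref{Case1LorentzianEq1}, the fact that $k_2,k_3$ depend only on $k_1$, and the Gauss equation. A secondary, purely bookkeeping nuisance is tracking the signs $\langle e_2,e_2\rangle$, $\langle e_3,e_3\rangle$ in the Lorentzian case, which only rescale $\phi_1,\phi_2$ by harmless constants and do not affect the conclusion.
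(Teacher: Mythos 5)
Your argument follows the same route as the paper's: establish $x_{st}=\omega_{12}(e_2)\,x_t$ and $x_{su}=\omega_{13}(e_3)\,x_u$ with coefficients depending on $s$ alone via \eqref{CONNFORMS} and \eqref{eiejomega}, then integrate. The two-curvature case is complete and in fact more detailed than the paper's one-line ``by integrating these equations.''

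In the three-curvature case, however, the step you flag at the end is a genuine gap, and the repair you sketch points in the wrong direction. From $x_t=\phi_1(s)\Psi_1(t,u)$ and $x_u=\phi_2(s)\Psi_2(t,u)$, the relation $\phi_1\,\partial_u\Psi_1=\phi_2\,\partial_t\Psi_2$ yields $\partial_u\Psi_1=\partial_t\Psi_2=0$ only if $\phi_1$ and $\phi_2$ are non-proportional, which you do not establish; and trying to exclude $\phi_1\propto\phi_2$ from the biconservativity relations is both unproved and unnecessary, since the Codazzi identities $e_1(k_i)=\omega_{1i}(e_i)(k_1-k_i)$ do not by themselves forbid $\omega_{12}(e_2)=\omega_{13}(e_3)$. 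The available fix is different: with three distinct principal curvatures, \eqref{CONNFORMS2} together with $e_i(k_2)=e_i(k_3)=0$ gives $\omega_{23}\equiv0$ (this is exactly what Lemma \ref{LemmaLeviCivitaThreeDistCurve} and the relations \eqref{Case1IntSub1Eq0} record), so $\widetilde\nabla_{e_3}e_2=0$ because $\omega_{12}(e_3)=\omega_{23}(e_3)=0$ and $h(e_2,e_3)=0$; combined with $[e_2,e_3]=0$ (hence $e_3(E_1)=0$) this yields $x_{tu}=0$ outright. Then $\Psi_1=\Psi_1(t)$ and $\Psi_2=\Psi_2(u)$ follow immediately, whether or not $\phi_1$ and $\phi_2$ are proportional, and \eqref{Case1CloseToClasssificationEq000} holds (the statement does not require $\phi_1\neq\phi_2$). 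With that substitution in place of your final paragraph, the proof is correct and matches the paper's.
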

\begin{proof}
Because of \eqref{eiejomega}, we have $\omega_{12}(e_2)=\alpha(s)$, $\omega_{13}(e_3)=\beta(s)$ . Therefore, \eqref{CONNFORMS} implies
\begin{equation}\label{Case1CloseToClasssificationEq1}
\widetilde\nabla_{e_2}e_1=\alpha(s)e_2,\widetilde\nabla_{e_3}e_1=\beta(s)e_3.
\end{equation}
Let $x$ be the position vector of $M$ and $(s,t,u)$ the coordinate system given in Proposition \ref{Prop1CoordSystem}. If $M$ has two distinct principal curvatures, then we have $k_2=k_3$ which implies $\alpha=\beta$. Therefore, from \eqref{Case1CloseToClasssificationEq00} and \eqref{Case1CloseToClasssificationEq1} we have $x_{st}=\alpha(s)x_t,\ x_{su}=\alpha(s)x_u$. By integrating these equations, we obtain \eqref{Case1CloseToClasssificationEq001}.

Now, suppose  that $M$ has three distinct principal curvatures. Then, from \eqref{Case1CloseToClasssificationEq00} and \eqref{Case1CloseToClasssificationEq1} we have
$$x_{st}=\alpha(s)x_t,\quad x_{su}=\beta(s)x_u.$$
By integrating these equations, we obtain \eqref{Case1CloseToClasssificationEq000}.
\end{proof}

\begin{Lemma}\label{Case1IntSub1}
Let $M$ be a biconservative hypersurface in $\mathbb E^4_1$ with the diagonalizable shape operator and $\hat M$ the integral submanifold of the distribution $D$ given
 by \eqref{EquationDistD} passing through $m\in M$. Then, if $M$ has three distinct principal curvatures, then $\hat M$ is congruent to one of the surfaces given by
\begin{enumerate}
\item[(i)] A Riemannian surface lying on a Euclidean hyperplane of  $\mathbb E^4_1$ given by $$y(t,u)=(1,t,B \cos u,B\sin u);$$
\item[(ii)] A Riemannian surface lying on a Lorentzian hyperplane of  $\mathbb E^4_1$  given by $$y(t,u)=(A\mathrm{cosh}t,A\mathrm{sinh}t,u,1);$$
\item[(iii)] A Lorentzian surface lying on a Lorentzian hyperplane of  $\mathbb E^4_1$  given by $$y(t,u)=(t,B \cos u,B\sin u,1);$$
\item[(iv)] A Lorentzian surface lying on a Lorentzian hyperplane of  $\mathbb E^4_1$  given by $$y(t,u)=(A\mathrm{sinh}t,A\mathrm{cosh}t,u,1);$$
\item[(v)] A Riemannian torus $\mathbb H^1(-A^2)\times \mathbb S^1(B^2)$ given by $$y(t,u)=(A\mathrm{cosh}t,A\mathrm{sinh}t,B \cos u,B\sin u);$$
\item[(vi)]  A Lorentzian torus $\mathbb S^1_1(A^2)\times \mathbb  S^1(B^2)$ given by $$y(t,u)=(A\mathrm{sinh}t,A\mathrm{cosh}t,B \cos u,B\sin u);$$
\item[(vii)] A Riemannian surface lying on a degenerated hyperplane of  $\mathbb E^4_1$  given by $$y(t,u)=(At^2+Bu^2,t,u,At^2+Bu^2).$$
\end{enumerate}
On the other hand,  if $M$ has two distinct principal curvatures, then $\hat M$ is congruent to one of the surfaces given by
\begin{enumerate}
\item[(viii)] A sphere $\mathbb S^2(r^2)\subset\mathbb E^3\subset\mathbb E^4_1$;
\item[(ix)] de Sitter space $\mathbb S^2_1(r^2)\subset\mathbb E^3_1\subset\mathbb E^4_1$;
\item[(x)] anti-de Sitter space $\mathbb H^2(-r^2)\subset\mathbb E^3_1\subset\mathbb E^4_1$;
\item[(xi)] The flat marginally trapped surface $y(t,u)=\left(A(t^2+u^2),t,u,A(t^2+u^2)\right)$.
\end{enumerate}
\end{Lemma}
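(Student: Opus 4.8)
The plan is to turn the structural information already available about $\hat M$ into ordinary differential equations for its profile curves, and then to integrate them and normalize by an isometry of $\mathbb E^4_1$. By Lemma \ref{Case1LemmaShapeOperators} and the Corollary following it, $\hat M$ is a surface in $\mathbb E^4_1$ whose normal fields $f_3=e_1|_{\hat M}$ and $f_4=N|_{\hat M}$ are parallel, whose shape operators $\hat A_{f_3}=\mathrm{diag}(c_1,c_2)$ and $\hat A_{f_4}=\mathrm{diag}(d_1,d_2)$ have constant eigenvalues, and which has parallel mean curvature vector. In the three-curvature case Lemma \ref{LemmaLeviCivitaThreeDistCurve} adds $\hat\nabla_{f_i}f_j=0$ and flatness; in the two-curvature case $c_1=c_2$ and $d_1=d_2$, so $\hat M$ is totally umbilical. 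In either situation the second fundamental form of $\hat M$ in $\mathbb E^4_1$ is parallel. I would work on the local parametrization of Proposition \ref{PropCase1CloseToClasssificationEq000}: restricting to $s=s_0$ turns $\hat M$ into $y(t,u)=\phi_1(s_0)\Theta_1(t)+\phi_2(s_0)\Theta_2(u)+\Gamma(s_0)$ in the three-curvature case and $y(t,u)=\phi(s_0)\Theta(t,u)+\Gamma(s_0)$ in the two-curvature case.

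For the three-curvature case, reparametrize $t$ and $u$ so that $\Theta_1'=f_1$ and $\Theta_2'=f_2$ are unit. Because $\hat\nabla_{f_1}f_1=0$, the Gauss formula gives $\Theta_1''=\widetilde\nabla_{f_1}f_1=h(f_1,f_1)$, which is a combination of $f_3$ and $f_4$ with constant coefficients; differentiating once more and using the Weingarten relations $\widetilde\nabla_{f_1}f_3=-c_1f_1$, $\widetilde\nabla_{f_1}f_4=-d_1f_1$ yields a constant-coefficient linear equation $\Theta_1'''+\kappa_1\Theta_1'=0$, and symmetrically $\Theta_2'''+\kappa_2\Theta_2'=0$, where $\kappa_1,\kappa_2$ are constants built from $c_1,d_1,c_2,d_2$ and the causal signs. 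Integrating, each $\Theta_i$ lies in an affine $2$-plane and is a circle if $\kappa_i>0$, a branch of a hyperbola if $\kappa_i<0$, and a straight line or a parabola (the latter necessarily lying in a lightlike $2$-plane) if $\kappa_i=0$; moreover, keeping the parametrization of arclength type forces the causal type of the $2$-plane to be spacelike when $\kappa_i>0$ and Lorentzian when $\kappa_i<0$.

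It then remains to pair the admissible shapes of $\Theta_1$ and $\Theta_2$, to read off the causal type of the affine subspace of $\mathbb E^4_1$ spanned by the two $2$-planes together with $\Gamma(s_0)$ — in particular, when both $\kappa_i=0$ with lightlike leading coefficients the compatibility conditions force the two null directions to be proportional, so that $\hat M$ sits inside a degenerate hyperplane — and to put the configuration in standard position by a rigid motion of $\mathbb E^4_1$. The combinations that cannot be realized inside $\mathbb E^4_1$ (for instance two orthogonal Lorentzian $2$-planes, which would require index $2$) are discarded, and the survivors are precisely the models (i)--(vii).

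For the two-curvature case, umbilicity gives $h(X,Y)=\langle X,Y\rangle\vec H$ with $\vec H$ a constant-coefficient combination of the parallel fields $f_3,f_4$; the biconservative relation $k_1=-\tfrac{3}{2}\varepsilon H$ combined with $k_1+k_2+k_3=3H$ forbids $\vec H=0$, so $\hat M$ is a non-totally-geodesic totally umbilical surface with parallel mean curvature vector, and according as $\vec H$ is spacelike, timelike or lightlike it lies in a pseudo-sphere, a pseudo-hyperbolic space, or a degenerate hyperplane; a short direct integration of $y(t,u)=\phi(s_0)\Theta(t,u)+\Gamma(s_0)$ then gives the four models (viii)--(xi). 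I expect the hard part to be the Lorentzian bookkeeping in the three-curvature case: systematically enumerating the admissible combinations of the causal characters of $f_1,f_2,f_3,f_4$ and of the two profile $2$-planes, eliminating the ones that cannot sit in $\mathbb E^4_1$, and correctly matching each surviving product surface — above all the degenerate one yielding (vii) — to its normal form. Each individual step is a routine integration or a linear-algebra verification, but the resulting case tree is long.
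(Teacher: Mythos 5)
Your proposal follows essentially the same route as the paper's proof: in the three-curvature case the paper also writes $\hat M$ as a translation surface $y(t,u)=\alpha(t)+\beta(u)$ (derived from $\widetilde\nabla_{f_2}f_1=0$ rather than by restricting Proposition \ref{PropCase1CloseToClasssificationEq000}), obtains the constant-coefficient equations $\alpha'''=(c_1^2-\varepsilon_1d_1^2)\alpha'$, $\beta'''=-(\varepsilon_3c_2^2+d_2^2)\beta'$ together with the flatness constraint $\varepsilon_1d_1d_2-c_1c_2=0$ (your ``compatibility condition''), and enumerates the causal subcases to land on (i)--(vii). In the two-curvature case the paper likewise reduces to a totally umbilical surface, splitting on the causal character of the constant hyperplane normal $\eta=\varepsilon_3d_1e_3-\varepsilon_4c_1e_4$, which is equivalent to your split on the causal character of $\vec H$; so the argument is correct and matches the paper's.
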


\begin{proof}
Let $\hat M$ be an integral submanifold of the distribution $D$ and $y$ the position vector of $\hat M$. Consider the  local orthonormal frame field $\{f_1,f_2;f_3,f_4\}$ on $M$ given before the Lemma \ref{Case1LemmaShapeOperators}. We study the cases $k_2\neq k_3$ and $k_2= k_3$ separately.

\textit{Case 1.} First, assume that $M$ has three distinct principal curvatures, i.e., $k_2\neq k_3$.  Without loss of generality, we may assume $\varepsilon_2=\varepsilon_4=1$ which gives $\varepsilon_1\varepsilon_3=-1$. Then, we have
\begin{align}
\begin{split}\label{Case1IntSub1Eq0}
\widetilde\nabla_{f_1}f_1=-c_1f_3+\varepsilon_1d_1f_4, \quad \widetilde\nabla_{f_1}f_2=\widetilde\nabla_{f_2}f_1=0, \quad \widetilde\nabla_{f_2}f_2=\varepsilon_3c_2f_3+d_2f_4,\\
\widetilde\nabla_{f_1}f_3=-c_1f_1, \quad \widetilde\nabla_{f_1}f_4=-d_1f_1, \quad \widetilde\nabla_{f_2}f_3=-c_2f_2, \quad\widetilde\nabla_{f_2}f_4=-d_2f_2
\end{split}
\end{align}
because of Lemma \ref{LemmaLeviCivitaThreeDistCurve}. Since $\hat M$ is flat and $\hat\nabla_{f_i}f_j=0,$ there exists a local coordinate system $(t,u)$ such that $g=\varepsilon_1 dt^2+du^2$, $f_1=\partial_t$ and $f_2=\partial_u$, where $\hat\nabla$ is the Levi-Civita connection of $\hat M$. Thus, $\widetilde\nabla_{f_2}f_1=0$ implies
\begin{equation}\label{Case1IntSub1Eq1}
y(t,u)=\alpha(t)+\beta(u)
\end{equation}
for some smooth vector valued functions $\alpha,\beta$.
From \eqref{Case1IntSub1Eq0} and \eqref{Case1IntSub1Eq1}, we obtain
\begin{subequations}\label{Case1IntSub1Eq2ALL}
\begin{eqnarray}
\label{Case1IntSub1Eq2a} \alpha'''&=&(c_1^2-\varepsilon_1d_1^2)\alpha',\\
\label{Case1IntSub1Eq2b} \beta'''&=&-(\varepsilon_3c_2^2+d_2^2)\beta'.
\end{eqnarray}
\end{subequations}
Moreover, since $\hat M$ is flat, we have
\begin{equation}\label{Case1IntSub1Eq3}
\varepsilon_1d_1d_2-c_1c_2=0.
\end{equation}

\textit{Case 1a.} $\varepsilon_1=-1$, i.e., $\hat M$ is Lorentzian. In this case, \eqref{Case1IntSub1Eq2ALL} implies
\begin{subequations}\label{Case1IntSub1Eq4ALL}
\begin{eqnarray}
\label{Case1IntSub1Eq4a} \alpha'''&=&\nu^2\alpha',\\
\label{Case1IntSub1Eq4b} \beta'''&=&-\mu^2\beta'
\end{eqnarray}
\end{subequations}
for some positive constants $\nu,\mu$. Since $\nu=\mu=0$ implies that $\hat M$ is a plane which yields a contradiction, we have $\nu^2+\mu^2\neq0$. Thus, if $\nu=0$, then $\mu\neq0$. In this case solving \eqref{Case1IntSub1Eq4ALL} yields that
$$y(t,u)=t^2\eta_1+t\eta_2+\cos(\mu u)\eta_3+\sin(\mu u) \eta_4$$
for some  constant vectors $\eta_1,\eta_2,\eta_3,\eta_4$. By considering $g=-dt^2+du^2$, we obtain the case (iii) of the lemma. Similarly, the other possible subcases  $\mu=0$, $\nu\neq0$ and  $\mu\nu\neq0$ give the case (iv) and the case (vi), respectively.

\textit{Case 1b.} $\varepsilon_1=1$, i.e., $\hat M$ is Riemannian. In this case,  \eqref{Case1IntSub1Eq2ALL} implies
\begin{subequations}\label{Case1IntSub1Eq6ALL}
\begin{eqnarray}
\label{Case1IntSub1Eq6a} \alpha'''&=&(c_1^2-d_1^2)\alpha',\\
\label{Case1IntSub1Eq6b} \beta'''&=&(c_2^2-d_2^2)\beta'.
\end{eqnarray}
\end{subequations}
By taking into account \eqref{Case1IntSub1Eq3}, we see that, without loss of generality, we have four cases.
\begin{align}\nonumber
\begin{split}
 c_1^2-d_1^2=\nu^2,\quad& c_2^2-d_2^2=-\mu^2;\\
 c_1=d_1\neq0,\quad& c_2^2-d_2^2=0;\\
 c_1=d_1=0,\quad& c_2^2-d_2^2=\nu^2;\\
 c_1=d_1=0,\quad& c_2^2-d_2^2=-\mu^2.
\end{split}
\end{align}
By integrating \eqref{Case1IntSub1Eq6ALL} for each cases separately, we see that  $\hat M$ is congruent to one of the following surfaces.
\begin{subequations}\label{Case1IntSub1Eq7ALL}
\begin{eqnarray}
\label{Case1IntSub1Eq7a} y(t,u)&=&\mathrm{cosh}(\nu t)\eta_1+\mathrm{sinh}(\nu t) \eta_2+\cos(\mu u)\eta_3+\sin(\mu u) \eta_4\\
\label{Case1IntSub1Eq7b} y(t,u)&=&t^2\eta_1+t(\nu t) \eta_2+u^2\eta_3+ u \eta_4,\\
\label{Case1IntSub1Eq7c} y(t,u)&=&\mathrm{cosh}(\nu t)\eta_1+\mathrm{sinh}(\nu t) \eta_2+u^2\eta_3+u \eta_4\\
\label{Case1IntSub1Eq7d} y(t,u)&=&t^2\eta_1+t\eta_2+\cos(\mu u)\eta_3+\sin(\mu u) \eta_4
\end{eqnarray}
\end{subequations}
for some constant vectors $\eta_1,\eta_2,\eta_3,\eta_4$. By a direct computation using  $g=dt^2+du^2$, we obtain the case (v), (vii), (ii) and (i) of the lemma, respectively.

\textit{Case 2.} Next, we assume that $M$ is a biconservative hypersurface with two distinct principal curvatures. Then, the shape operators of $\hat M$ becomes $A_3=c_1I,\ A_{f_4}=d_1I$ by the Lemma \ref{Case1LemmaShapeOperators}. Thus, $\hat M$ lies on a hyperplane $\Pi$ of $M$ whose normal is the constant vector $\eta=\varepsilon_3d_1e_3-\varepsilon_4 c_1 e_4$.

 If $\Pi$ is non-degenerated, then $\hat M$ is isoparametric. Thus, we have the case (viii) or cases (ix), (x) subsect to being Euclidean or non-Euclidean of $\Pi$, respectively.

Now, suppose that $\Pi$ is degenerated, i.e., $\eta$ is light-like. Then, we  have $c_1=d_1$. In addition, up to congruency, we may assume $\Pi=\{(A,B,C,A)|A,B,C\in\mathbb R\}$. Thus, $M$ has a parametrization $(f(t,u),t,u,f(t,u))$. Since $A_3=A_4=c_1I$, we have the case (xi) of the lemma.
\end{proof}

\subsection{Biconservative hypersurfaces with two principal curvatures}
In this section, we would like to deal with the biconservative hypersurfaces with two distinct principal curvatures.
\begin{theorem}\label{ClassThemCase12Dist}
Let $M$ be a hypersurface in $\mathbb E^4_1$ with diagonalizable shape operator and two distinct principal curvatures. If $M$ is biconservative, then it is congruent to one of hypersurfaces
\begin{subequations}\label{Case12DistThmxALL}
\begin{eqnarray}
\label{Case12DistThmx1} x_1(s,t,u)&=&(f_1(s),s \cos t\sin u,s\sin t\sin u,s \cos u),\\
\label{Case12DistThmx2} x_2(s,t,u)&=&(s\mathrm{sinh} u\sin t,s\mathrm{cosh}u\sin t,s\cos t,f_2(s)),\\
\label{Case12DistThmx3} x_3(s,t,u)&=&(s\mathrm{cosh}t,s\mathrm{sinh}t\sin u,\mathrm{sinh}t\cos u,f_3(s)),\\
\label{Case12DistThmx4} x_4(s,t,u)&=&\left(\frac 12  s( t^2+ u^2)+s+f_4(s), s t,s u,\frac 12  s( t^2+ u^2)+f_4(s)\right)
\end{eqnarray}
\end{subequations}
for some smooth functions $f_1,f_2,f_3,f_4$.
\end{theorem}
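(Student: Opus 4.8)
The plan is to combine the local parametrization of Proposition \ref{PropCase1CloseToClasssificationEq000} in the two-distinct-principal-curvature case with the explicit description of the integral surfaces $\hat M$ of the distribution $D$ supplied by cases (viii)--(xi) of Lemma \ref{Case1IntSub1}. From Proposition \ref{PropCase1CloseToClasssificationEq000} we know $x(s,t,u)=\phi(s)\Theta(t,u)+\Gamma(s)$, and for each fixed $s$ the level set is (a homothety of) $\hat M$; so $\Theta(t,u)$ is, up to an affine motion of the ambient space, one of the four model surfaces $\mathbb S^2(r^2)\subset\mathbb E^3$, $\mathbb S^2_1(r^2)\subset\mathbb E^3_1$, $\mathbb H^2(-r^2)\subset\mathbb E^3_1$, or the flat marginally trapped surface $(A(t^2+u^2),t,u,A(t^2+u^2))$. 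This gives four cases, which will correspond to the four hypersurfaces \eqref{Case12DistThmx1}--\eqref{Case12DistThmx4}.

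First I would fix the case and write $x(s,t,u)=\phi(s)\Theta(t,u)+\Gamma(s)$ with $\Theta$ the relevant model; for the non-degenerate cases one may choose coordinates on $\mathbb E^4_1$ so that $\hat M$ sits in a fixed coordinate hyperplane, which forces $\Gamma(s)$ to be valued in the orthogonal line (the direction of the constant normal $\eta$ appearing in the proof of Lemma \ref{Case1IntSub1}, Case 2), say $\Gamma(s)=(f(s),0,0,0)$ after rotation. Then I would impose that $e_1=\partial_s=\phi'(s)\Theta+\Gamma'(s)$ be a \emph{unit} principal direction (with $\langle e_1,e_1\rangle=\varepsilon_1$) and that $e_2,e_3$ be the remaining principal directions; differentiating $x$ and computing the induced metric shows $E_1,E_2$ are proportional to $\phi(s)$, and the unit normal $N$ is computed from $\Theta$ and the frame. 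The biconservative datum $k_1=-\tfrac{3\varepsilon}{2}H$, equivalently $k_1=\varepsilon(k_2+k_3)$ with $k_2=k_3$ (so $k_1=2\varepsilon k_2$ in the Lorentzian normalization used here, or $k_1=k_2+k_3$ in the Riemannian one), becomes a first-order ODE relating $\phi$, $\phi'$, $\phi''$ and the curvature $1/r^2$ of $\hat M$; solving it pins down $\phi(s)$ up to reparametrization of $s$, and one normalizes $\phi(s)=s$ (respectively $\phi(s)=\sinh t$, $\mathrm{cosh}\,t$ type expressions absorbed into $\Theta$), leaving $\Gamma(s)=f_i(s)$ as the single free function. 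Running this through the four models yields exactly \eqref{Case12DistThmx1}, \eqref{Case12DistThmx2}, \eqref{Case12DistThmx3}, \eqref{Case12DistThmx4}.

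For the degenerate case I would instead start from the parametrization $(f(t,u),t,u,f(t,u))$ of $\hat M=\Pi\cap M$ obtained at the end of the proof of Lemma \ref{Case1IntSub1}, with $f$ the flat marginally trapped profile $f=A(t^2+u^2)$; reintroducing the $s$-dependence via $x(s,t,u)=\phi(s)\Theta(t,u)+\Gamma(s)$ and again using that the $s$-curves are integral curves of $e_1=\nabla H/|\nabla H|$, one gets the lightlike-hyperplane form $x_4$ with the extra $+s$ term in the first slot produced by the condition that $\{e_1,e_2,e_3,N\}$ be an orthonormal frame along a null direction. The main obstacle I anticipate is bookkeeping rather than conceptual: correctly tracking the sign conventions ($\varepsilon$, $\varepsilon_1,\varepsilon_3,\varepsilon_4$) and the causal character of $e_1$ through each of the four models, and verifying that the resulting $\phi$ can be normalized as claimed without losing the case $\nabla H\neq 0$ (Remark following \eqref{BiconservativeEq}); conversely one must also check that each $x_i$ with arbitrary $f_i$ \emph{is} biconservative with two distinct principal curvatures, which is a direct but somewhat lengthy computation of $S$ and $H$ from the given parametrizations.
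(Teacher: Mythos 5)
Your proposal takes essentially the same route as the paper's proof: identify each slice $s=s_0$ with one of the model surfaces (viii)--(xi) of Lemma \ref{Case1IntSub1}, solve for $\Theta$ in $x=\phi(s)\Theta(t,u)+\Gamma(s)$, and use the orthonormality of $\{\partial_s,\partial_t,\partial_u\}$ together with a reparametrization of $s$ to normalize $\phi(s)=s$ (resp. $\bar\phi(s)=\tfrac12 s$ in the degenerate case) and to confine $\Gamma$ to the normal line of the hyperplane containing $\hat M$. One small caveat: the biconservative ODE you invoke to ``pin down'' the profile is not actually needed for this theorem (the paper defers it to the Proposition that follows, which determines $f_1$), and the relation you quote, $k_1=\varepsilon(k_2+k_3)$ with $k_1=2\varepsilon k_2$ in the Lorentzian case, has the wrong sign there --- from \eqref{BiconservativeEq} with $k_1=-\tfrac{3\varepsilon}{2}H$ one gets $-3k_1=k_2+k_3$ when $M$ is Lorentzian.
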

\begin{proof}
Let $M$ be a biconservative hypersurface in $\mathbb E^4_1$ with the parametrization given by \eqref{Case1CloseToClasssificationEq001} for some vector valued functions $\Theta,\Gamma$ and a function $\phi$. Now, consider the slice $\hat M$ of $M$ given by $s=s_0$ passing through $m=x(s_0,t_0,u_0)$. Obviously, it is an integral submanifold of the distribution $D$ given by \eqref{EquationDistD}. Then, $\hat M$ is one of four surfaces given in Case (viii)-(xi) of Lemma \ref{Case1IntSub1}.

First, assume that $\hat M$ is a sphere. In this case, up to isometries of $\mathbb E^4_1$, we may assume the position vector of $\hat M$ is $y(t,u)=x(s_0,t,u)=(1, A \cos t\sin u,A \sin t\sin u, A\cos u)$. Then, \eqref{Case1CloseToClasssificationEq001} implies
$$c_1\Theta(t,u)+c_2=(1, \cos t\sin u,\sin t\sin u, \cos u)$$
for a constant $c_1$ and constant vector $c_2$. By solving $\Theta$ from the above equation and using  \eqref{Case1CloseToClasssificationEq001}, we obtain $M$ is the hypersurface given by \eqref{Case12DistThmx1}.

Analogously, if $\hat M=\mathbb S^2_1(r^2)$ or $\hat M=\mathbb H^2_1(-r^2)$, we obtain $M$ is the hypersurface given by \eqref{Case12DistThmx2} or \eqref{Case12DistThmx3}, respectively.

On the other hand, if $\hat M$ is congruent to the flat marginally trapped surface given in the Case (xi)  of Lemma \ref{Case1IntSub1}, then, up to isometries, we may assume
$$x(s_0,t,u)=y(t,u)=\left(A(t^2+u^2),t,u,A(t^2+u^2)\right).$$
By combining this equation and \eqref{Case1CloseToClasssificationEq001} we obtain
$$c_1\Theta(t,u)+c_2=\left(A(t^2+u^2),t,u,A(t^2+u^2)\right),$$
where $c_1=\phi(s_0)$ and $c_2=\Gamma(s_0)$. Therefore, we may assume
$$\Theta(t,u)=\left(A'(t^2+u^2)+C_1,t+C_2,u+C_3,A'(t^2+u^2)+C_4\right)$$
for some constant $A',C_i$. Next, we put this equation into \eqref{Case1CloseToClasssificationEq001} to get
$$x(s,t,u)=\left(\bar\phi(s)(t^2+u^2),t,u,\bar\phi(s)(t^2+u^2)\right)+\bar\Gamma(s)$$
for a smooth function $\bar\phi_1$ and smooth vector valued function $\bar\Gamma.$ By taking into account that the vector fields $\partial_s,\partial_t,\partial_u$ are orthonormal and re-defining the coordinate $s$ properly,  we obtain $\bar\phi(s)=\frac 12s$ and $\bar\Gamma(s)=(s+f_4(s),0,0,f_4(s))$. Therefore, we obtain the surface given by \eqref{Case12DistThmx4}. Hence, the proof is completed.
 \end{proof}
By the following proposition, we  would like to prove the existence of biconservative hypersurfaces with two distinct curvatures.
\begin{Prop}
Let $M$ be the hypersurface given by \eqref{Case12DistThmx1} in $\mathbb E^4_1$. Then, $M$ is biconservative if and only if either $M$ is Riemannian and
\begin{equation}\label{Propx1Ri}
f_1=\int\limits_{s_0}^s\frac{c_1 \xi^2}{\sqrt{c_1^2 \xi^4-1}}d\xi
\end{equation}
or it is Lorentzian and
\begin{equation}\label{Propx1Lo}
f_1=\int\limits_{s_0}^s\frac{c_1}{\sqrt{c_1^2-\xi^{4/3}}}d\xi.
\end{equation}
 \end{Prop}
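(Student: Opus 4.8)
The idea is to take the explicit parametrization $x_1(s,t,u)=(f_1(s),s\cos t\sin u,s\sin t\sin u,s\cos u)$ and compute everything directly: the induced metric, the unit normal, the shape operator, and then impose the biconservative equation \eqref{BiconservativeEq}. First I would compute the coordinate tangent vectors $x_s,x_t,x_u$ and form the metric coefficients. A short computation gives $\langle x_t,x_u\rangle=0$, $\langle x_s,x_t\rangle=\langle x_s,x_u\rangle=0$, with $\langle x_t,x_t\rangle=s^2\sin^2u$, $\langle x_u,x_u\rangle=s^2$, and $\langle x_s,x_s\rangle=-f_1'(s)^2+1=:\varepsilon_1\lambda(s)^2$. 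Here the sign of $1-f_1'^2$ decides whether $M$ is Riemannian ($f_1'^2<1$) or Lorentzian ($f_1'^2>1$), which is exactly the dichotomy in the statement; note the spacelike part $s\Theta$ with $\Theta=(\cos t\sin u,\sin t\sin u,\cos u)$ traces out a round $\mathbb S^2$, so the $t,u$-directions are always spacelike and the causal character is governed entirely by the $s$-direction.

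**Key steps.** Next I would write down the unit normal $N$: since $x_t,x_u$ span the tangent to the spherical slice and $x_s$ is orthogonal to both, $N$ lies in the plane spanned by $(f_1'(s),0,0,0)$-type corrections and the radial spacelike vector $\Theta$; explicitly $N=\frac{1}{\sqrt{|1-f_1'^2|}}(1,f_1'\cos t\sin u,f_1'\sin t\sin u,f_1'\cos u)$ up to sign, and one checks $\langle N,N\rangle=\varepsilon=\mathrm{sgn}(f_1'^2-1)$. Then I compute the shape operator by differentiating $N$: because the frame $e_1\parallel x_s$, $e_2\parallel x_t$, $e_3\parallel x_u$ diagonalizes $S$, I only need the principal curvatures. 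One finds $k_2=k_3$ (the two distinct-curvature condition is automatic from the rotational symmetry of $\Theta$), say $k_2=k_3=:k$, and $k_1$ from $\widetilde\nabla_{e_1}N$. Then $3H=k_1+2k$, and the biconservative equation reduces — since $\nabla H=e_1(H)e_1$ with $e_1(H)\neq0$ — to the single scalar relation $k_1=-\tfrac{3\varepsilon}{2}H$, equivalently $k_1+\varepsilon(k_1+2k)/2\cdot\!$-type algebra, i.e. $(2+\varepsilon)k_1+2\varepsilon k=0$, which in the Riemannian case $\varepsilon=-1$ becomes $k_1=2k$ and in the Lorentzian case $\varepsilon=1$ becomes $3k_1=-2k$, matching \eqref{Case1RiemannianEq1}/\eqref{Case1LorentzianEq1} with $k_2=k_3=k$.

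**The ODE and integration.** With $k$ and $k_1$ expressed as functions of $s$ and $f_1',f_1''$ through the shape-operator computation (roughly $k=\varepsilon_1(\text{something})/(s\lambda)$ and $k_1=\varepsilon_1 f_1''/\lambda^3$-type expressions, where $\lambda^2=|1-f_1'^2|$), the biconservative condition becomes a first/second order ODE for $f_1$. I would introduce the substitution $p=f_1'$ to reduce the order, separate variables, and integrate once; the resulting constant of integration is the $c_1$ appearing in \eqref{Propx1Ri} and \eqref{Propx1Lo}. In the Riemannian branch the integration should yield $f_1'=c_1 s^2/\sqrt{c_1^2 s^4-1}$ (whence \eqref{Propx1Ri} by integrating in $s$), and in the Lorentzian branch $f_1'=c_1/\sqrt{c_1^2-s^{4/3}}$ (whence \eqref{Propx1Lo}); conversely, substituting these back verifies the biconservative equation, giving the "if" direction. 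The main obstacle I anticipate is purely computational bookkeeping: correctly tracking the signs $\varepsilon$, $\varepsilon_1$ and the absolute values $|1-f_1'^2|$ through the shape-operator and mean-curvature formulas, and recognizing which antiderivative to take so that the separated ODE integrates to precisely the stated closed forms rather than an equivalent-but-unrecognizable expression. No conceptual difficulty is expected beyond this, since Lemma \ref{Case1IntSub1} and Proposition \ref{PropCase1CloseToClasssificationEq000} already pin down the ambient shape of $M$.
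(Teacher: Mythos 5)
Your proposal is correct and follows essentially the same route as the paper's proof: compute the induced metric, unit normal and principal curvatures of the rotational hypersurface directly, reduce \eqref{BiconservativeEq} to the scalar relations $k_1=2k_2$ (Riemannian) and $-3k_1=2k_2$ (Lorentzian), and integrate the resulting separable ODE in $f_1'$ to obtain \eqref{Propx1Ri} and \eqref{Propx1Lo}. No substantive difference from the paper's argument.
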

\begin{proof}
By a direct computation one can obtain that the principal directions of $M$ are $e_1=\frac1{\sqrt{\varepsilon_1(1-f_1'^2)}}\partial_s$, $e_2=\frac 1s\partial_t$, $e_3=\frac 1s\partial_u$ with the corresponding principal curvatures
$$k_1=-\frac{\varepsilon_1f_1''}{\sqrt {\varepsilon_1(1-f_1'^2)^3}},\quad k_2=k_3=-\frac{f_1'}{s\sqrt{\varepsilon_1(1-f_1'^2)}},$$
where $\varepsilon_1=\langle e_1,e_1\rangle.$ Let $M$ be a biconservative hypersurface, i.e., \eqref{BiconservativeEq} is satisfied.

First, assume that $M$ is Riemannian, i.e., $\varepsilon=1$. Then, from \eqref{BiconservativeEq} we have $k_1=2k_2$ which implies
$$\frac{f_1''}{f_1'(1-f_1'^2)}=\frac 2s$$
whose general solution is \eqref{Propx1Ri}.

Next, assume that $M$ is Lorentzian, i.e., $\varepsilon=-1$. Then,  \eqref{BiconservativeEq} implies $-3k_1=2k_2$ from which we have
$$\frac{-3f_1''}{f_1'(1-f_1'^2)}=\frac 2s.$$
By solving this equation, we obtain \eqref{Propx1Lo}.

Hence, the proof of necessary condition is completed. The converse follows from a direct computation.
\end{proof}

\subsection{Biconservative hypersurfaces with three principal curvatures}
In this subsection we obtain the classification of biconservative hypersurfaces with three principal curvatures. First, we want to present an example by the following proposition.

\begin{Prop}\label{ExampleTheoremCase7}
Let $M$ be a hypersurface in $\mathbb E^4_1$ given by
\begin{align}\label{TheoremCase7}
\begin{split}
x(s,t,u)=&\left(\frac 12  s( t^2+ u^2)+a u^2+s+\phi(s), s t,( s+2a)  u,\right.\\
&\left.\frac 12  s( t^2+ u^2)+a u^2+\phi(s)\right), \quad a\neq0.
\end{split}
\end{align}
Then, $M$ is  biconservative if and only if  either $M$ is Riemannian and
\begin{equation}\label{ClassThemCase13DistEqdaadasfdsdx8}
\phi(s)=c_1 \left(\ln (s+2 a)- \ln s-\frac{a}{s}-\frac{a}{s+2 a}\right)-\frac{s}{2}
\end{equation}
or it is Lorentzian and
\begin{equation}\label{ClassThemCase13DistEqdaadadx8}
\phi(s)=c_1\int\limits_{s_0}^s\left(\xi(\xi+2a)\right)^{2/3}d\xi-\frac s2,
\end{equation}
where $c_1\neq0$ and $s_0$ are some constants.
\end{Prop}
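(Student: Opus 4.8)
The plan is to proceed exactly as in the proof of the previous proposition: read the principal frame and the principal curvatures off directly from the parametrization \eqref{TheoremCase7}, use them to turn the biconservative condition \eqref{BiconservativeEq} into a single ordinary differential equation in $s$, and then integrate that equation in the two cases $\varepsilon=-1$ and $\varepsilon=1$.

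First I would compute the coordinate vector fields of \eqref{TheoremCase7}. Writing $p=\tfrac12(t^2+u^2)$, one finds $x_t=(st,s,0,st)$, $x_u=\big((s+2a)u,0,s+2a,(s+2a)u\big)$ and $x_s=(p+1+\phi',t,u,p+\phi')$; a short computation shows these are mutually orthogonal, with $\langle x_t,x_t\rangle=s^2$, $\langle x_u,x_u\rangle=(s+2a)^2$ and $\langle x_s,x_s\rangle=-(1+2\phi'(s))$. Hence, on a domain where $1+2\phi'$ does not vanish, the fields $e_1=\tfrac1\lambda\partial_s$, $e_2=\tfrac1s\partial_t$, $e_3=\tfrac1{s+2a}\partial_u$ with $\lambda=\sqrt{|1+2\phi'|}$ form an orthonormal frame in which $e_2,e_3$ are spacelike, so $M$ is Riemannian precisely when $1+2\phi'<0$ and Lorentzian precisely when $1+2\phi'>0$. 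One checks that $\widetilde N=(p-\phi',t,u,p-1-\phi')$ is orthogonal to $x_s,x_t,x_u$ with $\langle\widetilde N,\widetilde N\rangle=1+2\phi'$, so $N=\widetilde N/\lambda$ is a unit normal and $\varepsilon=\langle N,N\rangle=-\langle e_1,e_1\rangle$.

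Next I would compute the shape operator from $\langle N,\widetilde\nabla_{e_i}e_j\rangle=\langle e_i,e_i\rangle\,k_i\,\delta_{ij}$. Since $x_{tu}=0$, $x_{st}=\tfrac1s x_t$ and $x_{su}=\tfrac1{s+2a}x_u$, the frame $\{e_1,e_2,e_3\}$ diagonalizes $S$; and since $x_{ss}=\phi''(1,0,0,1)$, $x_{tt}=s(1,0,0,1)$, $x_{uu}=(s+2a)(1,0,0,1)$ while $\langle N,(1,0,0,1)\rangle=-1/\lambda$, the principal curvatures are
\[
k_1=\frac{\varepsilon\,\phi''}{\lambda^3},\qquad k_2=-\frac1{s\lambda},\qquad k_3=-\frac1{(s+2a)\lambda}.
\]
In particular $k_2\neq k_3$ since $a\neq0$, so $M$ has three distinct principal curvatures. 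Because $k_1,k_2,k_3$ depend only on $s$, so does $H$, hence $\nabla H$ is a multiple of $e_1$, nonzero by the standing assumption, and \eqref{BiconservativeEq} with $n=3$ is equivalent to $Se_1=-\tfrac{3\varepsilon}2H\,e_1$, i.e. to $k_1=-\tfrac{3\varepsilon}2H$; combined with $k_1+k_2+k_3=3H$ this reduces to $k_1=k_2+k_3$ when $\varepsilon=-1$ and to $-3k_1=k_2+k_3$ when $\varepsilon=1$.

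Finally I would insert the explicit $k_i$ and use $\lambda^2=\varepsilon(1+2\phi')$. Clearing $\lambda$, the two relations become $\dfrac{\phi''}{1+2\phi'}=-\dfrac{2(s+a)}{s(s+2a)}$ when $\varepsilon=-1$ and $\dfrac{3\phi''}{1+2\phi'}=\dfrac{2(s+a)}{s(s+2a)}$ when $\varepsilon=1$. Setting $w=1+2\phi'$ both are separable, $\dfrac{w'}{w}=-\Big(\dfrac2s+\dfrac2{s+2a}\Big)$ resp.\ $\dfrac{w'}{w}=\dfrac13\Big(\dfrac2s+\dfrac2{s+2a}\Big)$, with solutions $w=C\,s^{-2}(s+2a)^{-2}$ resp.\ $w=C\,\big(s(s+2a)\big)^{2/3}$; integrating $\phi'=\tfrac12(w-1)$ once more — using the partial fraction expansion of $s^{-2}(s+2a)^{-2}$ in the first case — gives \eqref{ClassThemCase13DistEqdaadasfdsdx8} and \eqref{ClassThemCase13DistEqdaadadx8} up to an additive constant in $\phi$, which can be absorbed by a translation of $\mathbb E^4_1$, the sign of $c_1$ being forced by whether $w$ is negative (Riemannian) or positive (Lorentzian). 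The converse is immediate by reversing this computation. There is no genuine conceptual obstacle here: the only points requiring care are the bookkeeping of the sign $\varepsilon=\langle N,N\rangle$ in the expression for $k_1$ (note that $(1,0,0,1)$ is a null vector, so it is $x_s-\widetilde N$, not $x_s$, that is proportional to it), and the partial fraction integration producing the logarithmic terms of \eqref{ClassThemCase13DistEqdaadasfdsdx8}.
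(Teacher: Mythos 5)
Your proposal is correct and follows essentially the same route as the paper: compute the orthonormal principal frame, the unit normal and the principal curvatures directly from the parametrization \eqref{TheoremCase7}, reduce \eqref{BiconservativeEq} to $k_1=k_2+k_3$ (Riemannian) or $-3k_1=k_2+k_3$ (Lorentzian), and integrate the resulting separable ODE for $w=1+2\phi'$; your ODEs $\phi''/(1+2\phi')=\mp(1/s+1/(s+2a))$ and their solutions agree with the paper's \eqref{ClassThemCase13DistEqdaadasfdsdx8} and \eqref{ClassThemCase13DistEqdaadadx8}. The only difference is that you carry out the partial-fraction integration explicitly, which the paper leaves implicit.
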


\begin{proof}
By a direct computation, one can obtain that the principal directions of $M$ are
$$e_1=\frac{1}{\sqrt{\varepsilon_1(-2\phi'-1)}}\partial_s,\quad e_2=\frac 1s\partial_t,\quad e_3=\frac 1{s+2a}\partial_u$$
and the unit normal vector of $M$ is
$$N=\frac{1}{\sqrt{\varepsilon_1(-2\phi'-1)}}\left(\frac{t^2+u^2}2-\phi',t,u,\frac{t^2+u^2}2-\phi'-1\right).$$
By a simple calculation, we have
\begin{align}\label{ClassThemCase13DistEqx8}
\begin{split}
\widetilde\nabla_{e_1}e_1&=\zeta e_1+\frac{\phi''}{\varepsilon_1(-2\phi'-1)}(1,0,0,1),\\
\widetilde\nabla_{e_2}e_2&=\frac{1}{s}(1,0,0,1),\\
\widetilde\nabla_{e_3}e_3&=\frac{1}{s+2a}(1,0,0,1)
\end{split}
\end{align}
for a smooth function $\zeta$.

\textbf{Riemannian Case.} If $M$ is Riemannian, then we have $\varepsilon_1=1$. In this case, $M$ is biconservative if and only if equation $k_1=k_2+k_3$ is satisfied. Thus, we have
$$-\frac{\phi''}{(2\phi'+1)}=\frac{1}{s}+\frac{1}{s+2a}$$
whose general solution is
$$\phi(s)=c_1 \left(\ln (s+2 a)- \ln s-\frac{a}{s}-\frac{a}{s+2 a}\right)-\frac{s}{2}+c_2,$$
where $c_1,c_2$  are constants. Note that, up to congruency, we may assume $c_2=0$. Thus, we have \eqref{ClassThemCase13DistEqdaadasfdsdx8}.

\textbf{Lorentzian Case.} If $M$ is Lorentzian, then we have $\varepsilon_1=-1$ and $M$ is biconservative if and only if  $-3k_1=k_2+k_3$ which implies
$$\frac{3\phi''}{2\phi'+1}=\frac{1}{s}+\frac{1}{s+2a}$$
whose general solution is the function given in \eqref{ClassThemCase13DistEqdaadadx8}.
\end{proof}

Next, we obtain the following classification theorem of biconservative hypersurfaces with three distinct principal curvatures.
\begin{theorem}\label{FullClassThemCase13Dist}
Let $M$ be a hypersurface in $\mathbb E^4_1$ with diagonalizable shape operator and three distinct principal curvatures. Then $M$ is biconservative if and only if it is congruent to one of hypersurfaces
\begin{enumerate}
\item[(i)] A generalized cylinder $M^2_0\times \mathbb E^1_1$ where $M$ is a biconservative surface in $\mathbb E^3$;
\item[(ii)] A generalized cylinder $M^2_0\times \mathbb E^1$ where $M$ is a biconservative Riemannian surface in $\mathbb E^3_1$;
\item[(iii)]A generalized cylinder $M^2_1\times \mathbb E^1$, where $M$ is a biconservative Lorentzian surface in $\mathbb E^3_1$;
\item[(iv)] A Rimennian surface given by
\begin{equation}\label{TheoremCase4}
x(s,t,u)=\left(s\mathrm{cosh}t,s\mathrm{sinh}t,f_1(s) \cos u,f_1(s)\sin u\right)
\end{equation}
 for a function $f_1$ satisfying
\begin{equation}\label{TheoremCase4ODE}
\frac{f_1''}{f_1'^2-1}=\frac{f_1f_1'+s}{sf_1};
\end{equation}
\item[(v)] A Lorentzian surface with the parametrization given in \eqref{TheoremCase4} for a function $f_1$ satisfying
\begin{equation}\label{TheoremCase4ODE2}
\frac{-3f_1''}{f_1'^2-1}=\frac{f_1f_1'+s}{sf_1};
\end{equation}
\item[(vi)] A Rimennian surface  given by
\begin{equation}\label{TheoremCase6}
x(s,t,u)=\left(s\mathrm{sinh}t,s\mathrm{cosh}t,f_2(s) \cos u,f_2(s)\sin u\right)
\end{equation}
 for a function $f_2$ satisfying
$$\frac{f_2''}{f_2'^2+1}=\frac{f_2f_2'+s}{sf_2};$$
\item[(vii)] A  surface given in Proposition \ref{ExampleTheoremCase7}.
\end{enumerate}
\end{theorem}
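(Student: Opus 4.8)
The plan is to exploit the two local parametrizations already established in Proposition \ref{PropCase1CloseToClasssificationEq000} together with the full classification of the integral surface $\hat M$ in the case of three distinct principal curvatures (cases (i)--(vii) of Lemma \ref{Case1IntSub1}). Fix $m\in M$ and pass through it the slice $\hat M=\{s=s_0\}$; by Lemma \ref{Case1IntSub1} its position vector $y(t,u)$ is congruent to exactly one of the seven explicit surfaces listed there, and by Proposition \ref{PropCase1CloseToClasssificationEq000} the hypersurface itself has the form $x(s,t,u)=\phi_1(s)\Theta_1(t)+\phi_2(s)\Theta_2(u)+\Gamma(s)$. For each of the seven cases I would substitute $x(s_0,t,u)=y(t,u)$ into the equation $\phi_1(s_0)\Theta_1(t)+\phi_2(s_0)\Theta_2(u)+\Gamma(s_0)=y(t,u)$, solve for $\Theta_1(t)$ and $\Theta_2(u)$ up to additive and multiplicative constants, and feed the result back into the separated form of $x$. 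This is exactly the bookkeeping carried out in the proof of Theorem \ref{ClassThemCase12Dist}: one absorbs the constant vectors into $\Gamma$ and rescales $s$, $t$, $u$ so that $\{\partial_s,\partial_t,\partial_u\}$ is orthonormal in the induced metric, which pins down $\phi_1$, $\phi_2$ up to one free function of $s$.

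The case-by-case outcome should be: cases (v) and (vi) of Lemma \ref{Case1IntSub1} (the Riemannian and Lorentzian flat tori $\mathbb H^1\times\mathbb S^1$ and $\mathbb S^1_1\times\mathbb S^1$) and the degenerate case (vii) give, after reparametrization, hypersurfaces whose first two ``spherical/hyperbolic'' factors dilate with $s$ while the third direction contributes the cylinder factor, producing the parametrizations \eqref{TheoremCase4}, \eqref{TheoremCase6} and the surface of Proposition \ref{ExampleTheoremCase7}; cases (i)--(iv) of Lemma \ref{Case1IntSub1}, in which $\hat M$ lies in a hyperplane and has a flat direction ($t$ or $u$ entering linearly), force the hypersurface to split off a one-dimensional Euclidean or Lorentzian factor $\mathbb E^1$ or $\mathbb E^1_1$, i.e. $M=M^2_\nu\times\mathbb E^1_\mu$. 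In the latter situation one then invokes that $M$ is biconservative iff the surface factor $M^2_\nu\subset\mathbb E^3$ or $\mathbb E^3_1$ is biconservative: since the cylinder direction is totally geodesic and $H$ and $S$ are unchanged under the product, equation \eqref{BiconservativeEq} for $M$ reduces to the corresponding equation on $M^2_\nu$, which gives items (i)--(iii). Conversely, for each listed hypersurface one computes $e_1,e_2,e_3$, the principal curvatures, and checks \eqref{BiconservativeEq} directly; for (iv)--(vi) this is precisely the ODE $k_1=k_2+k_3$ (Riemannian) or $-3k_1=k_2+k_3$ (Lorentzian), which after the substitution of the explicit $k_i$ in terms of $f_1,f_1',f_1''$ (resp. $f_2$) yields \eqref{TheoremCase4ODE}, \eqref{TheoremCase4ODE2} and the ODE in (vi); for (vii) the verification was already done in Proposition \ref{ExampleTheoremCase7}.

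The main obstacle is the elimination/reparametrization step in cases (v), (vii) of Lemma \ref{Case1IntSub1}: there the two separated blocks $\phi_1(s)\Theta_1(t)$ and $\phi_2(s)\Theta_2(u)$ dilate at \emph{different} rates, so one must be careful that the orthonormality conditions $\langle\partial_s,\partial_s\rangle=\varepsilon_1$, $\langle\partial_t,\partial_t\rangle=\langle\partial_u,\partial_u\rangle=1$, $\langle\partial_s,\partial_t\rangle=\langle\partial_s,\partial_u\rangle=0$ are mutually compatible; this is what forces the two dilation factors to coincide (hence the single radial function $f_1$ or $f_2$ multiplying one pair of coordinates while the other pair is $s\,(\cosh t,\sinh t)$ or $s\,(\sinh t,\cosh t)$) and is also what produces the degenerate quadratic surface of Proposition \ref{ExampleTheoremCase7} with its shift parameter $a$ in case (vii). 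A secondary subtlety is making sure that the surface factors $M^2_\nu$ appearing in (i)--(iii) genuinely have one flat direction in $M$ and are not already covered by the two-distinct-curvature theorem; this follows because three distinct principal curvatures of $M$ survive in $M^2_\nu$ as two distinct ones plus the vanishing curvature of the cylinder direction. Once these compatibility issues are handled, assembling the seven items is routine, and the converse direction is a direct computation of $S(\nabla H)$ in each case.
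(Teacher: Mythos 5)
Your proposal follows essentially the same route as the paper: fix the slice $\hat M=\{s=s_0\}$, identify it with one of the seven surfaces of Lemma \ref{Case1IntSub1}, solve for $\Theta_1,\Theta_2$ in the separated parametrization \eqref{Case1CloseToClasssificationEq000}, normalize via the orthonormality of $\partial_s,\partial_t,\partial_u$, and then impose $k_1=k_2+k_3$ (resp. $-3k_1=k_2+k_3$) to obtain the ODEs, with cases (i)--(iv) of the Lemma degenerating to cylinders because one principal curvature vanishes. The only quibble is your phrase that orthonormality ``forces the two dilation factors to coincide'' --- in fact they remain independent ($\phi_1(s)=s$, $\phi_2(s)=f_1(s)$ after the inverse function theorem); your parenthetical already describes the correct outcome, so this does not affect the argument.
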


\begin{proof}
Consider a biconservative hypersurface $M$ with three distinct curvatures and assume that the functions $k_1-k_2$, $k_1-k_3$ and $k_2-k_3$ are non-vanishing in $M$. Let $\hat M$ be the  integral submanifold of the distribution $D$ given by \eqref{EquationDistD} passing through $m=x(s_0,t_0,u_0)\in M$, where $x$ is the parametrization of $M$ given in  \eqref{Case1CloseToClasssificationEq000}. Then, $\hat M$ is congruent to one of the surfaces given in case (i)-(vii) of {Lemma} \ref{Case1IntSub1}.

\textbf{Case 1.} $\hat M$ is congruent to one of the surfaces given in the case (i)-(iv) of {Lemma} \ref{Case1IntSub1}. In this case, by a direct computation one can see that one of the principal curvatures  of $M$ vanishes identically. Therefore, we have case (i)-(iii) of the theorem.


\textbf{Case 2.} Let $\hat M$ be congruent to  the surfaces given in case (v) of {Lemma} \ref{Case1IntSub1}. Then, we may assume
$x(s_0,t,u)=y(t,u)=(A\mathrm{cosh}t,A\mathrm{sinh}t,B \cos u,B\sin u).$
By combining this equation and \eqref{Case1CloseToClasssificationEq000}, we have
\begin{equation}\label{Tempor1}
c_1\Theta_1(t)+c_2(s)\Theta_2(u)+c_3=(A\mathrm{cosh}t,A\mathrm{sinh}t,B \cos u,B\sin u),
\end{equation}
where $c_1=\phi_1(s_0),c_2=\phi_2(s_0)$ and $c_3=\Gamma(s_0)$. By redefining $\phi_1,\phi_2,\Gamma$ suitable, from \eqref{Tempor1} we obtain
\begin{align}
\begin{split}
\Theta_1(t)=&(\mathrm{cosh}t,\mathrm{sinh}t,0,0),\\
\Theta_2(u)=&(0,0,\cos u,\sin u).
\end{split}
\end{align}
Therefore, \eqref{Case1CloseToClasssificationEq000} implies
$$x(s,t,u)=\left(\phi_1(s)\mathrm{cosh}t,\phi_1(s)\mathrm{sinh}t,\phi_2(s) \cos u,\phi_2(s)\sin u\right)+\Gamma(s).$$
By a further computation considering that $\partial_s,\partial_t,\partial_u$, we see that $\Gamma$ is a constant vector which can be assumed to be zero up to a suitable translation. By using the  inverse function theorem, we assume $\phi_1(s)=s$ and $\phi_2(s)=f_1(s)$ for a smooth function $f$. Hence, we have \eqref{TheoremCase4}.

A direct computation shows that the principal curvatures of $M$ are
$\displaystyle k_1=\frac{\varepsilon f_1''}{\sqrt{\varepsilon(f_1'^2-1)^3}}$, $\displaystyle k_2=\frac{ f_1'}{s\sqrt{\varepsilon(f_1'^2-1)}}$ and $\displaystyle k_3=\frac{1}{\sqrt{\varepsilon(f_1'^2-1)}},$ where $\varepsilon=1$ or $\varepsilon=-1$ if $M$ is Riemannian or Lorentzian, respectively. Note that if $M$ is Riemannian or Lorentzian, then from \eqref{BiconservativeEq} we have $k_1+k_2=k_3$ or $k_1+k_2=-3k_3$, respectively. Thus, we have \eqref{TheoremCase4ODE} or \eqref{TheoremCase4ODE2}. Hence, we have obtained the case (iv) and the case (v) of the theorem.


\textbf{Case 3.} Let $\hat M$ be congruent to  the surfaces given in case (vi) of {Lemma} \ref{Case1IntSub1}. By a similar way to previous case we obtain the case surface given by \eqref{TheoremCase6} for a smooth function $f_2$. A direct computation shows that the principal curvatures of $M$ are $\displaystyle k_1=\frac{ f_2''}{\sqrt{(f_2'^2+1)^3}}$, $\displaystyle k_2=\frac{f_2'}{s\sqrt{(f_2'^2+1)}}$ and $\displaystyle k_3=\frac{1}{\sqrt{(f_2'^2+1)}}$  and $M$ is Riemannian. Since $M$ is Riemannian, we have  $k_1+k_2=k_3$ which gives the case (vi) of the theorem.


\textbf{Case 4.} Let $\hat M$ be congruent to  the surfaces given in case (vii) of {Lemma} \ref{Case1IntSub1}. In this case,  without loss of generality, we may assume $x(s_0,t,u)=y(t,u)$. Therefore, from \eqref{Case1CloseToClasssificationEq000} we have
$$\phi_1(s_0)\Theta_1(t)+\phi_2(s_0)\Theta_2(u)+\Gamma(s_0)=(At^2+Bu^2,t,u,At^2+Bu^2).$$
Thus, we get
\begin{subequations}\label{ClassTheoremEq3}
\begin{eqnarray}
\Theta_1(t)&=&\frac1{\phi_1(s_0)}(At^2+c_1,t+c_2,c_3,At^2+c_4),\\
\Theta_2(u)&=&\frac1{\phi_2(s_0)}(Bu^2+d_1,d_2,u+d_3,Bu^2+d_4)
\end{eqnarray}
\end{subequations}
for  some non-zero constants  $c_i,d_i$. By combining \eqref{ClassTheoremEq3} with \eqref{Case1CloseToClasssificationEq000} we obtain
\begin{equation}\label{ClassTheoremEq4}
x(s,t,u)=(A\psi_1 t^2+B\psi_2u^2,\psi_1 t,\psi_2u,A\psi_1 t^2+B\psi_2u^2)+\tilde\Gamma(s)
\end{equation}
for a smooth vector valued function $\tilde\Gamma=(\tilde\Gamma_1,\hdots,\tilde\Gamma_4)$ and some smooth functions $\psi_1(s),\psi_2(s)$. However, $\langle x_s,x_s\rangle=\varepsilon_1,\ \langle x_s,x_t\rangle=\langle x_s,x_u\rangle=0$ give
\begin{subequations}\nonumber
\begin{eqnarray}
\nonumber\psi_1'^2t^2+\psi_2'^2u^2+\langle\Gamma',\Gamma'\rangle+2(A\psi_1 t^2+B\psi_2u^2)(\tilde\Gamma_1'-\tilde\Gamma_4')+\psi_1\Gamma_2't+\psi_2\Gamma_3'u&=&\varepsilon_1,\\
\nonumber\psi_1t(\psi_1'+2A(\tilde\Gamma_4'-\tilde\Gamma_1'))+\psi_1\Gamma_2'&=&0,\\
\nonumber\psi_2u(\psi_2'+2B(\tilde\Gamma_4'-\tilde\Gamma_1'))+\psi_2\Gamma_3'&=&0.
\end{eqnarray}
\end{subequations}
Therefore, we have
$$(\tilde\Gamma_1-\tilde\Gamma_4)\neq0,\quad \psi_1=2A(\tilde\Gamma_1-\tilde\Gamma_4)+a_1,\quad \psi_2'=2B(\tilde\Gamma_1-\tilde\Gamma_4)+a_2$$
for some constants $a_1,a_2$ and, up to a suitable translation, we may assume $\Gamma_2=\Gamma_3=0$. Then,  we obtain a parametrization of $M$ as
\begin{align}\label{ClassTheoremEq6}
\begin{split}
x(s,t,u)=(&2(\tilde\Gamma_1-\tilde\Gamma_4)(A^2t^2+B^2u^2)+Aa_1t^2+Ba_2u^2+\tilde\Gamma_1,2A(\tilde\Gamma_1-\tilde\Gamma_4) t+a_1t,\\&2B(\tilde\Gamma_1-\tilde\Gamma_4) u+a_2u,2(\tilde\Gamma_1-\tilde\Gamma_4)(A^2t^2+B^2u^2)+Aa_1t^2+Ba_2u^2+\tilde\Gamma_4).
\end{split}
\end{align}
Note that if $AB=0$, then by a direct computation one can see that one of the principal curvatures vanishes identically on $M$. In this subcase, we obtain
$$ x( s, t, u)=\left(\frac 12  s t^2+s+\phi_1,s  t,\frac 12  s t^2+\phi_1, u\right)$$
which gives the case (ii) or case (iii) of the theorem. Thus, we assume $A\neq0, B\neq0$.

Next, we define new coordinates $(\bar s,\bar t,\bar u)$ such that $\bar s=\tilde\Gamma_1-\tilde\Gamma_4+a_1/2A$, $\bar t=2At$, $\bar u=2Bu$. From \eqref{ClassTheoremEq6} we obtain a parametrization of $M$ as given in \eqref{TheoremCase7} for a costant $a$ which is non-zero because $M$ has three distinct principal curvatures. Hence, we have the case (vii) of the theorem.

Hence, the proof of necessary condition is completed. The converse follows from a direct computation.
\end{proof}
\begin{Remark}
For the explicit parametrization of hypersurfaces given in case
(i)-(iii), see the complete classification of biconservative
surfaces in $\mathbb E^3$ and $\mathbb E^3_1$ which are given in
 \cite{HsanisField}, \cite{CMOP} and \cite{FuBiconservativeE31}.
\end{Remark}

\section*{Acknowledgements}
The first named author was supported by Project funded by China
Postdoctoral Science Foundation (No. 2014M560216) and the Excellent
Innovation talents Project of DUFE (No. DUFE2014R26).
The second named author  is supported by T\"UB\.ITAK  (Project Name: Y\_EUCL2TIP, Project Number: 114F199).


\end{document}